\def\RSthmtxt{theorem~}\newref{thm}{name = \RSthmtxt}}
\def\RSlemtxt{lemma~}\newref{lem}{name = \RSlemtxt}}
\numberwithin{equation}{section}
\numberwithin{figure}{section}
\theoremstyle{plain}
\newtheorem{thm}{\protect\theoremname}[section]
\theoremstyle{definition}
\newtheorem{defn}[thm]{\protect\definitionname}
\theoremstyle{plain}
\newtheorem{lem}[thm]{\protect\lemmaname}
\theoremstyle{definition}
\newtheorem{example}[thm]{\protect\examplename}
\theoremstyle{remark}
\newtheorem{rem}[thm]{\protect\remarkname}
\theoremstyle{plain}
\newtheorem{cor}[thm]{\protect\corollaryname}
\theoremstyle{definition}
\newtheorem{problem}[thm]{\protect\problemname}
\providecommand{\corollaryname}{Corollary}
\providecommand{\definitionname}{Definition}
\providecommand{\examplename}{Example}
\providecommand{\lemmaname}{Lemma}
\providecommand{\problemname}{Problem}
\providecommand{\remarkname}{Remark}
\providecommand{\theoremname}{Theorem}
\begin{document}
\global\long\def\norm#1{\left\Vert #1\right\Vert }%
\global\long\def\QQ{\mathbb{Q}}%
\global\long\def\CC{\mathbb{C}}%
\global\long\def\ZZ{\mathbb{Z}}%
\global\long\def\NN{\mathbb{N}}%
\global\long\def\KK{\mathbb{K}}%
\global\long\def\RR{\mathbb{R}}%
\global\long\def\FF{\mathbb{F}}%
\global\long\def\oo{\mathcal{O}}%
\global\long\def\limfi#1#2{{\displaystyle \lim_{#1\to#2}}}%
\global\long\def\pp{\mathcal{P}}%
\global\long\def\qq{\mathcal{Q}}%
\global\long\def\da{\mathrm{da}}%
\global\long\def\dt{\mathrm{dt}}%
\global\long\def\ds{\mathrm{ds}}%
\global\long\def\dm{\mathrm{dm}}%
\global\long\def\flr#1{\left\lfloor #1\right\rfloor }%
\global\long\def\ceil#1{\left\lceil #1\right\rceil }%
\global\long\def\SL{\mathrm{SL}}%
\global\long\def\PGL{\mathrm{PGL}}%
\global\long\def\bb{\mathcal{B}}%
\global\long\def\GL{\mathrm{GL}}%
\global\long\def\PSL{\mathrm{PSL}}%

\title{Graphs with large girth and free groups}
\begin{abstract}
We use Margulis' construction together with lattice counting arguments
to build Cayley graphs on $\SL_{2}\left(\FF_{p}\right),\;p\to\infty$
which are $d$-regular graphs with girth $\geq\frac{2}{3}\frac{\ln\left(n\right)}{\ln\left(d-1\right)+\ln\left(C\right)}$
for some absolute constant $C$.
\end{abstract}

\author{Ofir David}
\email{eofirdavid@gmail.com}
\maketitle

\section{Introduction}

Starting with an empty graph on $n$ vertices, we can add $n-1$ edges
without creating any cycle, thus getting a tree, and every edge after
that must create a new cycle. However, if we choose the placements
of these new edges carefully, we can make sure that at least locally
our graph still looks like a tree, or equivalently we do not form
small cycles. We call the length of the shortest simple cycle the
\emph{girth of the graph }and we denote it by $\mathfrak{g}$.

Clearly, if we add too many edges than we must have small cycles.
To be more specific, suppose that we have a $d$-regular graph $\Gamma$
on $n$-vertices with an even girth $\mathfrak{g}=2m$ (though a similar
result holds for odd girth). In this case, any ball of radius $m-1$
in the graph is a tree, and since our graph is $d$-regular, this
tree has $1+d\frac{\left(d-1\right)^{m-1}-1}{d-2}\geq\left(d-1\right)^{m-1}$
vertices. It follows that $n\geq\left(d-1\right)^{m-1},$and by taking
the logarithm
\[
1+2\frac{\ln\left(n\right)}{\ln\left(d-1\right)}\geq\mathfrak{g}.
\]
Thus, we see that if we fix the number of vertices $n$, then we can't
have that both the girth $\mathfrak{g}$ is large and the number of
edges, which is controlled by $d$, is too large.

On the other hand, we can ask what is the largest girth possible for
a $d$-regular graph on $n$ vertices, and with this upper bound in
mind, define $C\left(n,d\right)$ to be the largest number such that
there exists a $d$-regular graph on $n$ vertices with
\[
\mathfrak{g}\geq C\left(n,d\right)\frac{\ln\left(n\right)}{\ln\left(d-1\right)}.
\]
The argument above shows that $C\left(n,d\right)\leq2$ (up to the
$+1$). One of the first lower bounds for $C\left(n,d\right)$ was
given by Erd{\"o}s and Saks in \cite{erdos_regulare_1963}, where
they used a counting method to show the existence of graphs with $\mathfrak{g}\geq\frac{\ln\left(n\right)}{\ln\left(d-1\right)}\left(1-o\left(n\right)\right)$.
However, probably the first explicit construction is by Margulis in
\cite{margulis_explicit_1982} where he showed that Cayley graphs
of $\SL_{2}\left(\FF_{p}\right)$ with the generators 
\[
S=\left\{ \left(\begin{array}{cc}
1 & \pm2\\
0 & 1
\end{array}\right),\left(\begin{array}{cc}
1 & 0\\
\pm2 & 1
\end{array}\right)\right\} 
\]
form a family of $4$-regular graphs with $\mathfrak{g}\ge\frac{2\cdot\ln\left(3\right)}{3\ln\left(1+\sqrt{2}\right)}\frac{\ln\left(n\right)}{\ln\left(4-1\right)}$
so that $C\left(n,4\right)\geq\frac{2\cdot\ln\left(3\right)}{3\ln\left(1+\sqrt{2}\right)}\sim0.831$,
and for general $d$ he shows that $C\left(n,d\right)\geq\frac{4}{9}$.
One of the interesting components of this proof is that $S$ can be
viewed as a subset of $\SL_{2}\left(\ZZ\right)$, and there it generates
a (finite index) free subgroup, so that its Cayley graph is the $4$-regular
tree (and similar arguments were used for general $d$). Thus, the
projections of groups $\left\langle S\right\rangle _{\SL_{2}\left(\ZZ\right)}\to\left\langle S\right\rangle _{\SL_{2}\left(\FF_{p}\right)}=\SL_{2}\left(\FF_{p}\right)$
induce covering maps from the $4$-regular tree to the finite graphs
in Margulis' construction. These graphs should be thought of as being
better and better approximations of the universal covering tree, and
what Margulis is showing is that this approximation is in a sense
``uniform'' (locally, the graphs look like trees) and ``fast''
(the girth is growing logarithmically in $n$). Margulis result was
later improved using similar ideas by Imrich in \cite{imrich_explicit_1984}
where he showed that $C\left(n,d\right)\geq0.48$.

One of the main benefits of working with Cayley graphs is that they
have many symmetries, and in particular they are vertex transitive.
It follows that in order to show that there are no small cycles, we
``only'' need to show that in a small neighborhood of a single vertex,
so in a sense we get the ``uniformity'' condition above automatically.
Furthermore, once we work with Cayley graphs, we have all the tools
from group theory in our disposal, which usually make things easier
and more interesting.

A second important construction leading to graphs with high girth
are the LPS graphs which were constructed by Lubotzky, Philips and
Sarnak (see \cite{lubotzky_ramanujan_1988}). These graphs are actually
Ramanujan graphs which are in a sense the best possible expander graphs,
and one of the properties of these graphs is that they have high girth.
As in Margulis' construction, the LPS graphs also arise from an algebraic
construction, this time from quaternion algebras, which in a sense
are very close to be matrix algebras, and here too there is a free
group which hides in the background. There are two types of LPS graphs,
on $\PSL_{2}\left(\FF_{p}\right)$ and $\PGL_{2}\left(\FF_{p}\right)$
respectively where the first satisfies $\mathfrak{g}\geq\frac{2}{3}\frac{\ln\left(n\right)}{\ln\left(d-1\right)}$
and the other $\mathfrak{g}\geq\frac{4}{3}\frac{\ln\left(n\right)}{\ln\left(d-1\right)}$.
In these graphs also one of the main component is to lift the graphs,
but instead of lifting to $\PGL_{2}\left(\ZZ\right)$, we lift them
to the $p$-adic group $\PGL_{2}\left(\ZZ_{p}\right)$. When we run
over the different primes $p$ (which can be put together inside the
adeles), we produce the different graphs. Thus, once again we have
a sort of a universal object, such that our family of graphs is just
increasing quotients of this object.

Other than these two construction, there are many more constructions,
see for example \cite{lazebnik_new_1995,lazebnik_explicit_1995},
though the LPS graphs still have the best result for large girth.
In this paper, we revisit Margulis' first construction and improve
it to get a family with $\mathfrak{g}\geq\frac{2}{3}\frac{\ln\left(n\right)}{\ln\left(d-1\right)}$.
While this doesn't improve upon the currently known results, it does
however uses a combination of ideas from combinatorics and group theory
which we find very interesting. Moreover, it also leads to questions
regarding lattice counting problems which seem natural and might suggest
generalization of this construction.
\begin{thm}
\label{thm:main}There exists an absolute constant $\tilde{C}$ and
$W_{R}\subseteq\SL_{2}\left(\ZZ\right)$ such that $W_{R}$ is a symmetric
basis for a free group, and the connected components $\Gamma_{p}$
of the identity of the Cayley graph $Cay\left(\SL_{2}\left(\FF_{p}\right),W_{R}\right)$
satisfy 
\[
\ceil{\frac{\mathfrak{g}}{2}}\geq\frac{1}{3}\frac{\ln\left(n_{p}\right)}{\ln\left|W_{R}\right|+\ln\left(\tilde{C}\right)},\quad n_{p}=\text{number of vertices in }\text{\ensuremath{\Gamma_{p}}}.
\]
\end{thm}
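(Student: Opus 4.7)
The plan is to follow the Margulis scheme: exhibit a symmetric set $W_R\subset\SL_2(\ZZ)$ that freely generates a subgroup, then lift cycles in $\mathrm{Cay}(\SL_2(\FF_p),W_R)$ to non-trivial elements of this free group which are trivial modulo $p$, and bound their word length from below by a matrix-norm inequality. The improvement over Margulis' original rank-$4$ construction comes from a lattice-point count: the number of $g\in\SL_2(\ZZ)$ with operator norm $\leq R$ is $\asymp R^{2}$, and one exploits this quadratic growth to enlarge the generating set.

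\emph{Step 1 (construction of $W_R$).} I would construct a symmetric $W_R\subset\SL_2(\ZZ)$ with three properties: (a) $\|w\|\leq R$ for every $w\in W_R$, where $\|\cdot\|$ is the operator norm; (b) $W_R$ is a free basis of the subgroup $\langle W_R\rangle$; (c) $|W_R|\geq c R^{2}$ for an absolute $c>0$. The upper bound $|W_R|\leq\#\{g\in\SL_2(\ZZ):\|g\|\leq R\}\asymp R^{2}$ is classical, so (a) and (c) together say that $W_R$ occupies positive density inside the norm ball. The freeness in (b) is the delicate part: one must select many matrices of bounded norm that simultaneously satisfy a ping-pong / Schottky criterion --- for instance by working inside a fixed free subgroup $F\leq\SL_2(\ZZ)$ (such as the Sanov group generated by the two matrices appearing in Margulis' set $S$) and organising elements of norm $\leq R$ by the positions of their attracting and repelling fixed points on $\partial\mathbb{H}^{2}$. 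I identify this step as the main obstacle of the proof.

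\emph{Step 2 (cycles lift to non-trivial elements).} Since $W_R$ is a symmetric free basis, a simple cycle of length $\ell$ at the identity in $\mathrm{Cay}(\SL_2(\FF_p),W_R)$ corresponds to a reduced word $w=s_1\cdots s_\ell$ in $W_R$ (any $s_{i+1}=s_i^{-1}$ would shortcut the cycle). Freeness makes $w\neq I$ in $\SL_2(\ZZ)$, while the cycle condition gives $w\equiv I\pmod{p}$. Therefore $w-I$ is a nonzero integer matrix with every entry divisible by $p$, which forces $\|w\|\geq p-1$. Submultiplicativity of the operator norm yields $\|w\|\leq R^{\ell}$, so
\[
\mathfrak{g}\;\geq\;\ell\;\geq\;\frac{\ln(p-1)}{\ln R}.
\]

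\emph{Step 3 (assembling the bound).} From $n_p\leq|\SL_2(\FF_p)|<p^{3}$ we get $\ln p\geq\frac{1}{3}\ln n_p$, and from (c) we get $\ln R\leq\frac{1}{2}\ln|W_R|+\frac{1}{2}\ln(1/c)$. Substituting into Step 2 yields
\[
\ceil{\frac{\mathfrak{g}}{2}}\;\geq\;\frac{\mathfrak{g}}{2}\;\geq\;\frac{\ln(p-1)}{2\ln R}\;\geq\;\frac{1}{3}\cdot\frac{\ln n_p}{\ln|W_R|+\ln\tilde{C}}
\]
for an absolute constant $\tilde{C}$ absorbing $1/c$ and the lower-order corrections (from $p-1$ versus $p$ and from $p^{3}-p$ versus $p^{3}$). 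The factor $\frac{1}{2}$ coming from $\ln R\leq\frac{1}{2}\ln|W_R|$ --- equivalently the $R^{2}$ lattice-point count --- is exactly what multiplies the naive $\frac{1}{3}$ (from $\ln p\geq\frac{1}{3}\ln n_p$) to produce the $\frac{2}{3}$ constant on the girth side, and so everything will come down to delivering Step 1.
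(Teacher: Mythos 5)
Your Steps 2--3 are straightforward arithmetic, but the entire weight of the argument rests on Step 1, and Step 1 as you state it is a genuine gap: it is a strictly stronger statement than anything the paper establishes, and your sketched route to it cannot deliver it. You ask for a symmetric free basis $W_{R}$ with $\norm w\leq R$ for every $w\in W_{R}$ and $\left|W_{R}\right|\geq cR^{2}$, i.e.\ about as many free generators as there are lattice points in the whole ball. The construction that is actually available (\lemref{existance_free_groups}) is quadratically weaker: the generators are the cycle labels $\left(gs\right)\left(gs\right)^{T}$ with $\norm g\leq R$, so there are $\asymp R^{2}$ of them but their norms go up to $18R^{2}$; that is, $\left|W_{R}\right|\asymp M$ rather than $\left|W_{R}\right|\asymp M^{2}$, where $M=\max_{w\in W_{R}}\norm w$. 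The paper compensates by the halving trick in \lemref{girth}: a nonbacktracking cycle of length $k$ produces two half-words which agree mod $p$ but differ over $\ZZ$, giving $\left(\eta M\right)^{\ceil{k/2}}\gtrsim p$, i.e.\ a bound directly on $\ceil{\mathfrak{g}/2}$ rather than on $\mathfrak{g}$ --- exactly the factor $2$ you are instead trying to extract from the lattice count. With the construction one can actually build, your whole-word estimate $\mathfrak{g}\geq\ln\left(p-1\right)/\ln M$ yields only $\ceil{\mathfrak{g}/2}\gtrsim\frac{1}{6}\,\ln\left(n_{p}\right)/\left(\ln\left|W_{R}\right|+O\left(1\right)\right)$, half of the claimed bound.

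As for proving your Step 1: the ping-pong/Schottky selection you gesture at cannot reach density $R^{2}$. An element of norm $R$ contracts the complement of an $\epsilon$-neighbourhood of its repelling point into an arc of size roughly $1/\left(R^{2}\epsilon\right)$; with $N$ players the $2N$ arcs must be pairwise disjoint, forcing $\epsilon\lesssim1/N$, and the ping-pong condition then forces $N\lesssim R$, not $R^{2}$. Finite-index free subgroups cannot help either: a subgroup of index $i$ has only about $R^{2}/i$ elements of norm $\leq R$ while its rank is about $i/12$, so again at most $\asymp R$ basis elements of norm $\leq R$. In fact, if your Step 1 were available, feeding it into \lemref{girth} would immediately give $\mathfrak{g}\gtrsim\frac{4}{3}\ln\left(n\right)/\ln\left(d\right)$, matching the LPS record by elementary means --- an improvement the paper explicitly discusses as not attained by the present method. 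So Step 1 is not a detail to be filled in later; it is an open (quite possibly false) strengthening, and the proof must instead be routed through the combinatorial completion of the norm ball to a covering graph together with the cycle-halving estimate, as the paper does.
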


\subsection{Acknowledgment}

I would like to thank Ami Paz for first introducing me to this problem
of finding graphs with large girth.

The research leading to these results has received funding from the
European Research Council under the European Union Seventh Framework
Programme (FP/2007-2013) / ERC Grant Agreement n. 335989.

\newpage{}

\section{Constructing the graphs}

As mentioned before, our construction uses Cayley graphs of $\SL_{2}\left(\FF_{p}\right)$.
The basic argument for the lower bound on the girth is the same as
in Margulis' paper, while the difference will be in the choice of
generators which is related to lattice counting problems. With this
in mind, in section \subsecref{Cayley-graphs} we start by recalling
Margulis' proof, then in section \subsecref{alg_topology} we begin
to study how to construct free subgroups in $\SL_{2}\left(\ZZ\right)$
by considering them as fundamental groups of graphs. Finally, in \subsecref{Finding-good-generators}
we use these ideas to show how to construct sets of generators which
produce Cayley graphs with high girth.

\subsection{\label{subsec:Cayley-graphs}Cayley graphs and Margulis' proof}

We begin with the construction of Cayley graphs which will be our
examples of graphs with high girth.
\begin{defn}
Let $G$ be a group and $S\subseteq G$ a set. The Cayley graph $Cay\left(G,S\right)$
is the directed graph with $G$ as the set of vertices and the edges
$E=\left\{ g\to gs\;\mid\;\forall g\in G,\;s\in S\right\} $. We define
a labeling on the edges $L:E\to S$ by $L\left(g\to gs\right)=s$. 

Given an element $g\in G$ and a tuple $\bar{s}=\left(s_{1},...,s_{k}\right)\in S^{k}$
for some $k$, we define the path $P_{g,\bar{s}}$ to be
\[
P_{g,\bar{s}}=g\to gs_{1}\to gs_{1}s_{2}\to\cdots\to gs_{1}\cdots s_{k}.
\]
\end{defn}

The Cayley graphs are highly symmetric, and many of their combinatorial
properties can be formulated using the group $G$.
\begin{lem}
Let $G$ be a group and $S\subseteq G$.
\begin{enumerate}
\item The underlying undirected graph of $Cay\left(G,S\right)$ is connected
if and only if $S$ generates $G$.
\item The standard left action of $G$ on itself induces a left action of
$G$ on $Cay(G,S)$. In particular $Cay\left(G,S\right)$ is vertex
transitive.
\item A path $P_{g,\bar{s}},\;g\in G,\;\bar{s}\in S^{d}$ is nonbacktracking
if and only if $s_{i}s_{i+1}\neq e$ for each $i$, and it is a cycle
if and only if $s_{1}s_{2}\cdots s_{d}=e$. In particular, the distance
in the graph between $g$ and $gh$ is the word length of $h$ over
the elements in $S$ (which is infinite if $h\notin\left\langle S\right\rangle $).
\end{enumerate}
\end{lem}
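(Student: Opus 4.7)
The plan is to verify each of the three items by directly translating the graph-theoretic statement into a group-theoretic one using the definition $E=\{g\to gs\mid g\in G,\, s\in S\}$; no external tools are needed.

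For item (1), I would note that a step in the \emph{undirected} graph from a vertex $v$ to a neighbor corresponds either to right-multiplication by some $s\in S$ (an edge traversed forward) or by $s^{-1}$ (an edge traversed backward). Hence the set of vertices reachable from $e$ is exactly the subgroup generated by $S\cup S^{-1}$, which equals $\langle S\rangle$, and the graph is connected iff $\langle S\rangle = G$. For item (2), defining $h\cdot g := hg$ on vertices sends the edge $g\to gs$ to $hg\to hgs$, which is again in $E$, giving a graph automorphism for each $h\in G$. Vertex transitivity then follows since, given any two vertices $g_1,g_2$, the element $h:=g_2 g_1^{-1}$ sends $g_1$ to $g_2$; labels are preserved automatically because the right-multiplication factor $s$ is unchanged.

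For item (3), the backtracking condition at step $i$ along $P_{g,\bar s}$ says that the vertex after step $i+1$ equals the vertex after step $i-1$, i.e.\ $gs_1\cdots s_{i+1}=gs_1\cdots s_{i-1}$, which after cancellation is $s_i s_{i+1}=e$. The path is closed iff its endpoints coincide: $gs_1\cdots s_d=g$, i.e.\ $s_1\cdots s_d=e$. The distance formula drops out immediately: a walk of length $d$ from $g$ to $gh$ exists iff $h$ admits a length-$d$ factorization in the generating set, so the graph distance equals the word length of $h$, which is infinite exactly when $h\notin\langle S\rangle$ (recovering item (1)). All three parts are routine unwindings of definitions; the only care-point is to distinguish the directed graph from its underlying undirected version when talking about connectivity and distances, since a backward edge corresponds to multiplication by $s^{-1}$ rather than $s$. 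No serious obstacle is anticipated.
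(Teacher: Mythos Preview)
Your proposal is correct and is exactly the routine unwinding of definitions one would expect; the paper itself writes only ``These are all pretty easy, and we leave it as an exercise,'' so there is no alternative approach to compare against. Your care-point about distinguishing directed versus undirected edges is the only thing worth flagging, and you handle it appropriately.
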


\begin{proof}
These are all pretty easy, and we leave it as an exercise.
\end{proof}
\begin{example}
For $G=D_{6}$ the dihedral group with 6 elements and $S=\left\{ r,s\right\} $
where $r$ is the rotation and $s$ the reflection, we get the following
Cayley graph{\small{}
\[
\xymatrix{*++[o][F]{s}\ar[rrd]_{r}\ar@/_{.5pc}/[dd]_{s} &  &  &  & *++[o][F]{rs}\ar[llll]_{r}\ar@/_{.5pc}/[dd]_{s}\\
 &  & *++[o][F]{r^{2}s}\ar[rru]_{r}\ar@/_{.5pc}/[dd]\sb(0.35)s\\
*++[o][F]{e}\ar@/_{.5pc}/[uu]_{s}\ar[rrrr]\sp(0.3)r &  &  &  & *++[o][F]{r}\ar@/_{.5pc}/[uu]_{s}\ar[lld]^{r}\\
 &  & *++[o][F]{r^{2}}\ar@/_{.5pc}/[uu]\sb(0.65)s\ar[llu]^{r}
}
\]
}{\small\par}
\end{example}

If $S=S^{-1}$ is symmetric, then whenever $g\mapsto gs$ is an edge,
we also have the edge $gs\mapsto g=\left(gs\right)s^{-1}$. In this
case we will also think of $Cay\left(G,S\right)$ as an undirected
graph (after identifying these pair of edges), so we may talk about
the girth $\mathfrak{g}\left(Cay\left(G,S\right)\right)$. As with
the lemma above, this too can be formulated in the language of $G$
and $S$.
\begin{lem}
Let $G$ be a finite group and $S=S^{-1}\subseteq G$. Then the girth
$\mathfrak{g}\left(Cay\left(G,S\right)\right)$ is the smallest $d$
such that $\exists s_{i}\in S,\;i=1,...,d$ with $s_{i}s_{i+1}\neq e$
and $\prod_{1}^{d}s_{i}=e$.
\end{lem}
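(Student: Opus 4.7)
The plan is to combine vertex-transitivity of Cayley graphs with the characterization of cycles and nonbacktracking paths from the preceding lemma, thereby reducing the problem to counting the length of the shortest nonbacktracking closed walk at the identity. Since $G$ acts transitively on $Cay(G,S)$, every cycle can be translated to pass through $e$, so the girth equals the length of the shortest cycle based at $e$. By the preceding lemma, closed walks of length $d$ at $e$ correspond exactly to tuples $\bar{s} = (s_1,\ldots,s_d) \in S^d$ with $\prod_{i=1}^d s_i = e$, and they are nonbacktracking precisely when $s_i s_{i+1} \neq e$ for all $i$.

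The one nontrivial point is bridging between \emph{cycle} (simple, no repeated vertices) and \emph{nonbacktracking closed walk}. In one direction, any simple cycle is automatically nonbacktracking, so the girth is at least the minimum length of a nonbacktracking closed walk. For the other direction, I would show that a nonbacktracking closed walk $W = v_0 \to v_1 \to \cdots \to v_d = v_0$ of minimum length is automatically simple: if $v_i = v_j$ for some pair $(i,j) \neq (0,d)$ with $i < j$, then $v_i \to v_{i+1} \to \cdots \to v_j$ is a strictly shorter nonbacktracking closed walk (its nonbacktracking is inherited from $W$), contradicting minimality. Hence the minimum length of a nonbacktracking closed walk equals the girth.

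Combining these observations, the girth is the smallest $d$ for which there exist $s_1,\ldots,s_d \in S$ with $s_i s_{i+1}\neq e$ and $\prod_i s_i = e$, which is the stated assertion. The only subtle sanity check is that the condition $s_i s_{i+1}\neq e$, i.e.\ $s_{i+1}\neq s_i^{-1}$, is indeed the correct notion of nonbacktracking in the \emph{undirected} Cayley graph, including when some $s\in S$ is an involution; this holds because the undirected edge $\{g,gs\}$ is traversed consecutively in opposite directions exactly when $s_{i+1}=s_i^{-1}$. I do not expect any serious obstacle here — the whole content of the lemma is translating the combinatorial notion of girth into the group-theoretic language already set up in the previous lemma.
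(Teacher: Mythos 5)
Your argument is correct. The paper gives no proof of this lemma (like the preceding lemma it is implicitly left as an exercise), and your route --- translate a shortest cycle to the identity by vertex transitivity, read off the labels via the preceding lemma, and show that a shortest nonbacktracking closed walk must be simple by excising a repeated vertex --- is exactly the intended argument. The only genuinely delicate point is that the condition $s_{i}s_{i+1}\neq e$ in the statement is not cyclic, i.e.\ nothing is assumed about $s_{d}s_{1}$; your minimality/excision step covers this as well, since a shortest admissible word with $s_{d}s_{1}=e$ (equivalently $v_{1}=v_{d-1}$ in the walk) would produce a strictly shorter admissible word of length $d-2$, so the shortest nonbacktracking closed walk is in fact a genuine simple cycle of the undirected Cayley graph.
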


Things begin to be interesting when we have a homomorphism $\varphi:G\to H$
and for simplicity assume that $\varphi\mid_{S}$ is injective, so
we can identity $S$ with $\varphi\left(S\right)$. Such a homomorphism
will induce a graph homomorphism $\tilde{\varphi}:Cay\left(G,S\right)\to Cay\left(H,\varphi\left(S\right)\right)$,
so if $S$ is symmetric we immediately get that 
\[
\mathfrak{g}\left(Cay\left(G,S\right)\right)\geq\mathfrak{g}\left(Cay\left(H,\varphi\left(S\right)\right)\right).
\]

Margulis' idea was to fix $G=\SL_{2}\left(\ZZ\right)$, and to study
the standard congruence morphisms $\pi_{p}:\SL_{2}\left(\ZZ\right)\to\SL_{2}\left(\FF_{p}\right)$
(and note that $\pi_{p}\mid_{S}$ is injective for almost every $p$).
If we ever hope to get such a family with some lower bound $\mathfrak{g}\geq c\frac{\ln\left(n\right)}{\ln\left(d-1\right)}$,
then by the argument above the Cayley graph on $\SL_{2}\left(\ZZ\right)$
must be a tree. The connected components of a Cayley graph $Cay\left(G,S\right)$
are trees if and only if we cannot write $\prod_{1}^{d}s_{i}=e$ (a
cycle) without $s_{i}s_{i+1}=e$ for some $i$ (backtracking), so
that $S$ is a basis for a free group. Hence, a good place to look
for graphs with high girth, is with projections of a Cayley graph
of a free subgroup of $\SL_{2}\left(\ZZ\right)$.

Once the problem is in $Cay\left(\SL_{2}\left(\ZZ\right),S\right)$,
Margulis used the fact that we can use norms on $\SL_{2}\left(\ZZ\right)$.
So before we give the main idea of his proof, we need one definition
for norms.
\begin{defn}
Given a norm $\norm{\cdot}$ on $M_{2}\left(\RR\right)$, let $\eta\left(\norm{\cdot}\right)=\sup_{g,h\in\SL_{2}\left(\ZZ\right)}\frac{\norm g\norm h}{\norm{gh}}$. 
\end{defn}

\begin{example}
\begin{enumerate}
\item If the norm is multiplicative, for example the operator norm or the
$l_{2}$-norm, then $m\left(\norm{\cdot}\right)=1$.
\item For the max norm, it is easy to check that $\eta\left(\norm{\cdot}_{\infty}\right)=2$.
\end{enumerate}
\end{example}

\begin{lem}
\label{lem:girth}Let $F\leq\SL_{2}\left(\ZZ\right)$ be a free group
over a symmetric set $S=\left\{ s_{1}^{\pm1},...,s_{d}^{\pm1}\right\} $,
and let $p$ be a prime such that $\pi_{p}\mid_{S}$ is injective.
Let $\norm{\cdot}$ be any norm on $M_{2}\left(\RR\right)$ bigger
than $\norm{\cdot}_{\infty}$, and set $\eta=\eta\left(\norm{\cdot}\right)$
and $M={\displaystyle \max_{s\in S}}\norm s$. Then for $\mathfrak{g}=girth\left(Cay\left(\SL_{2}\left(p\right),S\right)\right)$
we get that $\left(\eta M\right)^{\ceil{\mathfrak{g}/2}}\geq\frac{\eta p}{2}$.
\end{lem}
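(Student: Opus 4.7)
The plan is to lift a girth-$\mathfrak{g}$ cycle in $Cay\left(\SL_{2}\left(\FF_{p}\right),S\right)$ to a nontrivial element of $F\leq\SL_{2}\left(\ZZ\right)$ which reduces to $I$ modulo $p$, and then play a triangle-inequality lower bound (coming from integrality) against iterated sub-multiplicativity of $\norm{\cdot}$. The $\ceil{\mathfrak{g}/2}$ in the conclusion (rather than $\mathfrak{g}$) will come from symmetrically splitting the reduced word into two halves before estimating each.

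By the preceding Cayley-graph lemma, a simple cycle of length $\mathfrak{g}$ through the identity produces a nonbacktracking sequence $s_{1},\dots,s_{\mathfrak{g}}\in S$ whose product $w$ satisfies $\pi_{p}(w)=I$. Since the word $s_{1}\cdots s_{\mathfrak{g}}$ is reduced in the free group $F$, we have $w\neq I$ in $\SL_{2}(\ZZ)$. Set $d:=\ceil{\mathfrak{g}/2}$ and split $w=uv$ with $u=s_{1}\cdots s_{d}$ and $v=s_{d+1}\cdots s_{\mathfrak{g}}$. The identity $\pi_{p}(u)\pi_{p}(v)=I$ rearranges to $u-v^{-1}=pB$ for some $B\in M_{2}(\ZZ)$, and crucially $B\neq 0$: otherwise $u=v^{-1}$ in $\SL_{2}(\ZZ)$, so $w=uv=I$, contradicting the previous line. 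Since $B$ is a nonzero integer matrix and $\norm{\cdot}\geq\norm{\cdot}_{\infty}$, we conclude
\[
\norm{u-v^{-1}}\geq\norm{u-v^{-1}}_{\infty}\geq p,
\]
and the triangle inequality gives $\norm{u}+\norm{v^{-1}}\geq p$.

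To finish, iterate the defining bound $\norm{gh}\leq\eta\norm{g}\norm{h}$ to obtain the word estimate $\norm{s_{i_{1}}\cdots s_{i_{k}}}\leq\eta^{k-1}M^{k}$ for any $k$-letter word in $S$. Applied to $u$ (length $d$) and to $v^{-1}$ (length $\mathfrak{g}-d\leq d$), and using $\eta M\geq 1$ to absorb the at-most-one missing factor when $\mathfrak{g}$ is odd, this yields $\norm{u},\norm{v^{-1}}\leq\eta^{d-1}M^{d}$. Summing gives $2\eta^{d-1}M^{d}\geq p$, which rearranges to $(\eta M)^{\ceil{\mathfrak{g}/2}}\geq\eta p/2$, as required. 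The only genuinely structural input is the nonvanishing of $B$, which is precisely where freeness of $F$ enters; everything else is sub-multiplicativity and the triangle inequality. The symmetric splitting is the key combinatorial idea: it buys the factor of two in the exponent at the price of a factor of two in the constant.
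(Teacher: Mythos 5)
Your proof is correct and takes essentially the same route as the paper: lift a nonbacktracking cycle to a reduced (hence nontrivial) word of the free group, split it as $uv$ with $u\equiv v^{-1}\pmod p$, use integrality to get $\norm{u-v^{-1}}\geq\norm{u-v^{-1}}_{\infty}\geq p$, and bound each half by iterated submultiplicativity to reach $\left(\eta M\right)^{\ceil{\mathfrak{g}/2}}\geq\frac{\eta p}{2}$. The only cosmetic differences are that you use the triangle inequality where the paper uses twice the maximum of the two norms, and you state explicitly the (true, and easily verified) inequality $\eta M\geq1$ needed for odd $\mathfrak{g}$, which the paper leaves implicit.
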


\begin{proof}
Suppose that $\prod_{1}^{k}t_{i}\equiv_{p}I$ with $t_{i}\in S$ and
$t_{i}t_{i+1}\not\equiv_{p}I$, and in particular $t_{i}t_{i+1}\neq I$.
Since $F$ is free over $S$ this implies that $\prod_{1}^{k}t_{i}\neq I$.
Combinatorially speaking, a nonbacktracking cycle in $Cay\left(\SL_{2}\left(\FF_{p}\right),S\right)$
always lifts to a nonbacktracking path in $Cay\left(\SL_{2}\left(\ZZ\right),S\right)$
which is not a cycle.

Using the fact that $\SL_{2}\left(\ZZ\right)$ sits inside $M_{2}\left(\ZZ\right)$,
we get that that $\prod_{1}^{\flr{k/2}}t_{i}-\prod_{k}^{\flr{k/2}+1}t_{i}^{-1}\neq0$
over $\ZZ$, while it is 0 mod $p$, which implies that 
\[
2\cdot\max\left\{ \norm{\prod_{1}^{\flr{k/2}}t_{i}},\norm{\prod_{k}^{\flr{k/2}+1}t_{i}^{-1}}\right\} \geq\norm{\prod_{1}^{\flr{k/2}}t_{i}-\prod_{k}^{\flr{k/2}+1}t_{i}^{-1}}\geq\norm{\prod_{1}^{\flr{k/2}}t_{i}-\prod_{k}^{\flr{k/2}+1}t_{i}^{-1}}_{\infty}\geq p.
\]
Since $\norm{gh}\leq\eta\norm g\norm h$ for any $g,h\in\SL_{2}\left(\RR\right)$,
it follows that $\frac{\left(\eta M\right)^{\ceil{k/2}}}{\eta}\geq\frac{p}{2}$
which completes the proof.
\end{proof}
\newpage{}

If the elements of $S$ are distinct mod $p$, then $Cay\left(\SL_{2}\left(\FF_{p}\right),S\right)$
is $\left|S\right|$-regular. Given the lemma above, we want to find
a set $S=S^{-1}$ which is a basis of a free subgroup of $\SL_{2}\left(\ZZ\right)$
where $M={\displaystyle \max_{s\in S}}\norm s_{\infty}$ is as small
as possible, thus producing a graph with a high girth.

In Margulis' construction in \cite{margulis_explicit_1982}, he uses
\[
S=\left\{ \left(\begin{array}{cc}
1 & \pm2\\
0 & 1
\end{array}\right),\left(\begin{array}{cc}
1 & 0\\
\pm2 & 1
\end{array}\right)\right\} 
\]

and the operator norm, so that $\eta=1$ and $M=1+\sqrt{2}$. Additionally,
we have that $n=\left|\SL_{2}\left(\FF_{p}\right)\right|=p\left(p^{2}-1\right)$
, $d=4$ and $\SL_{2}\left(\FF_{p}\right)$ is generated by $S$.
Therefore by the lemma above we get that 
\[
\ceil{\mathfrak{g}/2}\geq\frac{\ln\left(\frac{p}{2}\right)}{\ln\left(1+\sqrt{2}\right)}=\frac{1}{3}\frac{\ln\left(p^{3}\right)-\ln\left(8\right)}{\ln\left(1+\sqrt{2}\right)}\geq\frac{\ln\left(3\right)}{3\ln\left(1+\sqrt{2}\right)}\frac{\ln\left(n\right)-\ln\left(8\right)}{\ln\left(4-1\right)}.
\]
Thus, this gives a construction with $\mathfrak{g}\geq0.831...\cdot\frac{\ln\left(n\right)}{\ln\left(d-1\right)}$.
\begin{rem}
In general, fixing the norm, if $d$ is big, then $M$ is going to
be big, so in the result of the \lemref{girth}, after taking the
logarithm, the constant $\eta$ will be negligible. In particular
this will be true if we let $d$ grow to infinity as well. In this
case we can simply think of the result (asymptotically) as 
\[
\mathfrak{g}\geq\frac{2}{3}\frac{\ln\left(n\right)}{\ln\left(M\right)}.
\]

In the rest of these notes we will only use the infinity norm, though
for small $d$, one might try to optimize the choice of the norm to
get better results.
\end{rem}

\subsection{\label{subsec:alg_topology}Free groups and graph covers}

Now that we have the basic idea of the proof and its relation to free
groups, we continue to construct free subgroups as fundamental groups
of graph covers.

Consider the following example of a labeled graph (defined below).

\begin{figure}[H]
\begin{centering}
\includegraphics[scale=0.5]{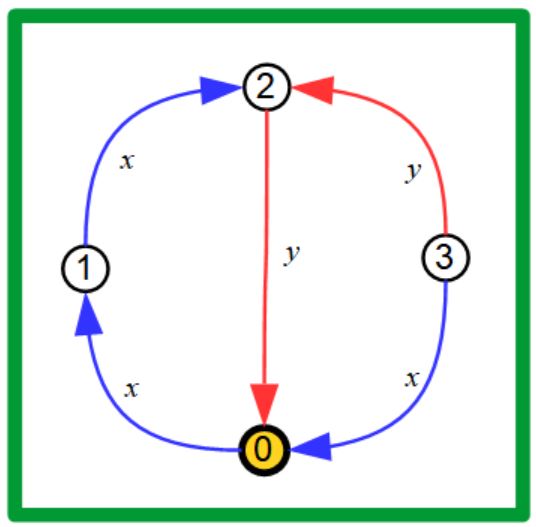}
\par\end{centering}
\caption{\label{fig:first_example} A labeled graph.}
\end{figure}
It has two ``main'' cycles corresponding to $x^{2}y$ on the left
and $x^{-1}y^{2}$ on the right, and every other cycle can be constructed
using these two cycles (up to homotopy, i.e. modulo backtracking).
Furthermore, the labeling allows us to think of these cycles as elements
in $F_{2}=\left\langle x,y\right\rangle $, so that the fundamental
group of the graph could be considered as the subgroup generated by
$x^{2}y$ and $x^{-1}y^{2}$. With this example in mind, we now give
the proper definitions to make this argument more precise.\\

One of the most basic results in algebraic topology is that the fundamental
group of a graph is always a free group. Let us recall some of the
details.
\begin{defn}[Cycle basis]
\label{def:Cycle_basis}Let $\Gamma$ be a connected undirected graph with a special vertex
$v\in V\left(\Gamma\right)$, and let $T\subseteq E\left(\Gamma\right)$
be a spanning tree. For each edge $e:u\to w$ let $C_{e}$ be the
simple cycle going from $v$ to $u$ on the unique path in the tree
$T$, then from $u$ to $w$ via $e$ and finally from $w$ to $v$
via $T$. We denote by $C\left(T\right)=\left\{ e\notin T\;\mid\;C_{e}\right\} $
this collection of cycles. 
\end{defn}

It is not hard to show that any cycle in a connected graph can be
written as a concatenation of cycles in $C\left(T\right)$ and their
inverses as elements in the fundamental group $\pi_{1}\left(\Gamma\right)$
(namely, we are allowed to remove backtracking). More over, it has
a unique such presentation which leads to the following:
\begin{cor}
\label{cor:cycle_basis}Let $\Gamma$ be a graph and $T$ a spanning
tree. Then $C\left(T\right)$ is a basis for $\pi_{1}\left(\Gamma\right)$
which is a free group on $\left|E\left(\Gamma\right)\right|-\left|V\left(\Gamma\right)\right|+1$
elements. 
\end{cor}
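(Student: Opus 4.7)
The plan is to prove three things: that $|C(T)| = |E(\Gamma)|-|V(\Gamma)|+1$, that $C(T)$ generates $\pi_{1}(\Gamma,v)$, and that there are no non-trivial relations among these generators. The cardinality count is immediate because any spanning tree of a connected graph on $|V(\Gamma)|$ vertices has exactly $|V(\Gamma)|-1$ edges, and by definition $C(T)$ is indexed by the non-tree edges. The substance of the proof is therefore the generation and freeness claims.

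For both simultaneously, the cleanest route is topological. Since $T$ is a contractible subcomplex of $\Gamma$, collapsing it to a point gives a homotopy equivalence $q:\Gamma\to \Gamma/T$, and the quotient is a wedge of $|E(\Gamma)|-|V(\Gamma)|+1$ circles, one for each edge $e\notin T$. A wedge of $k$ circles is a $K(F_{k},1)$ (or apply van~Kampen's theorem), so $\pi_{1}(\Gamma/T)$ is free of the claimed rank. Tracing the homotopy equivalence back to $\Gamma$ identifies the generator corresponding to the circle at edge $e$ with the cycle $C_{e}$ based at $v$, yielding generation and absence of relations simultaneously.

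If one prefers a direct combinatorial argument staying inside the graph, the idea is as follows. Given any closed walk $\gamma$ at $v$, insert between each consecutive pair of edges the unique tree path linking the relevant vertices; the result decomposes $\gamma$ as a concatenation of cycles of the form $C_{e}^{\pm1}$ with $e\notin T$, proving generation. For freeness, use the defining property that each $C_{e}$ traverses the non-tree edge $e$ exactly once and no other non-tree edge. In any reduced word $C_{e_{1}}^{\varepsilon_{1}}\cdots C_{e_{m}}^{\varepsilon_{m}}$, after all cancellations of tree edges are performed, the non-tree edges still appear in the order $e_{1}^{\varepsilon_{1}},\ldots,e_{m}^{\varepsilon_{m}}$, and no two consecutive entries can cancel because the word is reduced. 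Hence the concatenation is not null-homotopic.

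The main obstacle is the freeness step. Generation reduces, in either formulation, to an easy induction on the length of the walk. Showing the absence of relations is subtler, since \emph{a priori} internal cancellations among the tree-path portions of several $C_{e}$'s could conceivably collapse a non-trivial word to the identity; what rules this out is the fact that $T$ being a tree forces every such cancellation to be pure backtracking, so the non-tree letters survive unaltered. The topological approach black-boxes this bookkeeping inside the standard statement that a wedge of circles has free fundamental group.
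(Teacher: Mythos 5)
Your proposal is correct, and its combinatorial half is exactly the argument the paper sketches: every based cycle decomposes, modulo backtracking, into a concatenation of the $C_{e}^{\pm1}$, and the uniqueness of that presentation (your observation that the non-tree letters survive all tree-edge cancellations) gives freeness, while the count $\left|E\left(\Gamma\right)\right|-\left|V\left(\Gamma\right)\right|+1$ comes from the spanning tree having $\left|V\left(\Gamma\right)\right|-1$ edges. The additional route you give, collapsing $T$ to obtain a wedge of circles and invoking van Kampen, is the standard topological packaging of the same bookkeeping and is a perfectly valid alternative, but it is not a substantively different argument from what the paper intends.
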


\begin{example}
In \figref{first_example} the edge touching the 0 vertex form a spanning
tree, and then $C_{\left(1,2\right)}=0\overset{x}{\to}1\overset{x}{\to}2\overset{y}{\to}0$
and $C_{\left(3,2\right)}=0\overset{x}{\leftarrow}3\overset{y}{\to}2\overset{y}{\to}0$,
so that eventually we will think of the fundamental group as generated
by $x^{2}y$ and $x^{-1}y^{2}$ per our intuition from the start of
this section.
\end{example}

In particular, the corollary above implies that the fundamental group
of the bouquet graph with a single vertex and $n$ self loops is the
free group $F_{n}$. We can label the edges by the corresponding basis
elements $x_{1},...,x_{n}$ in $F_{n}=\left\langle x_{1},...x_{n}\right\rangle $.
Since it is important in which direction we travel across the edge,
we will think of each edge as two directed edges labeled by $x_{i}$
and $x_{i}^{-1}$ depending on the image in the fundamental group.
For simplicity, we will keep only the edges with the $x_{i}$ labeling,
understanding that we can also travel in the opposite direction via
an $x_{i}^{-1}$ labeled edge. 

It is well known that a fundamental group of a covering space correspond
to a subgroup of the original space. Using the generalization of the
labeling above we can produce covering using the combinatorics of
labeled graphs.

For the rest of this section we fix a basis $x_{1},...,x_{n}$ of
the free group $F_{n}$. 
\begin{defn}
A labeled graph $\left(\Gamma,v\right)$ is a directed graph $\Gamma$
with a special vertex $v$, where the edges are labeled by $x_{1},...,x_{n}$
(see \figref{labeled_graphs}). A labeled graph morphism $\left(\Gamma_{1},v_{1}\right)\to\left(\Gamma_{2},v_{2}\right)$
between labeled graphs is a morphism of graphs $\Gamma_{1}\to\Gamma_{2}$
which sends $v_{1}$ to $v_{2}$ and preserves the labels on the edges. 
\end{defn}

We denote by $\Gamma_{F_{n}}$ the bouquet graph with the $x_{1},...,x_{n}$
labeling. Note that another way to define a labeling on a graph $\Gamma$
is a morphism of directed graphs $\varphi:\Gamma\to\Gamma_{F_{n}}$
where the labeling of an edge $e\in E\left(\Gamma\right)$ is defined
to be the labeling of $\varphi\left(e\right)$. In this way a labeled
graph morphism is just a map which defines a commuting diagram
\[
\xymatrix{\left(\Gamma_{1},v_{1}\right)\ar[r]\ar[rd] & \left(\Gamma_{2},v_{2}\right)\ar[d]\\
 & \Gamma_{F_{n}}.
}
\]
This labeling map $\varphi:\Gamma\to\Gamma_{F_{n}}$ induces a homomorphism
$\hat{\varphi}:\pi_{1}\left(\Gamma,v\right)\to\pi_{1}\left(\Gamma_{F_{n}}\right)=F_{n}$.
Since every path in $\Gamma$ is sent to a cycle in $\Gamma_{F_{n}}$,
we can extend this map to general paths in $\Gamma$.
\begin{defn}
Let $\left(\Gamma,v\right)$ be a graph with a labeling $\varphi:\Gamma\to\Gamma_{F_{n}}$.
Given a path $P$ in $\Gamma$ starting at $v$, define the labeling
$L\left(P\right)$ of the path to be the (cycle) element $\varphi\left(P\right)$
in the fundamental group $\pi_{1}\left(F_{n}\right)$. In other words,
this is just the element in $F_{n}$ created by the labels on the
path.
\end{defn}

In general, for a labeled graph $\varphi:\Gamma\to\Gamma_{F_{n}}$
the function $\hat{\varphi}$ is not injective. However, in the Stallings
graphs case, defined below, it is.
\begin{defn}
A Stallings graph is a labeled graph $\varphi:\left(\Gamma,v\right)\to\Gamma_{F_{n}}$
where $\varphi$ is locally injective, namely for every vertex $u\in V\left(\Gamma\right)$
and every $i=1,...,n$ there is at most one outgoing edge from $u$
and at most one ingoing edge into $u$ labeled by $x_{i}$. We call
the graph a covering graph if $\varphi$ is a local homeomorphism,
or equivalently every vertex has exactly one ingoing and one outgoing
labeled by $x_{i}$ for every $i$.
\end{defn}

\begin{rem}
Given a covering graph, we can remove every edge and vertex which
are not part of a simple cycle so as to not change the fundamental
group. The resulting graph will be a Stallings graph, and conversely,
every Stallings graph can be extended to a covering graph of $\Gamma_{F_{n}}$
without changing the fundamental domain.
\end{rem}

In the Stallings graph case, it is an exercise to show that $\hat{\varphi}$
is injective, and we may consider $\pi_{1}\left(\Gamma,v\right)$
as a subgroup of $F_{n}$. Moreover, we can use \defref{Cycle_basis}
to find a basis for $\pi_{1}\left(\Gamma,v\right)$ as a subgroup
of $F_{n}$.
\begin{example}
In \figref{labeled_graphs} below, in the left most graph, the path
\[
P:=v_{0}\overset{x}{\longrightarrow}v_{1}\overset{y}{\longrightarrow}v_{2}\overset{y}{\longrightarrow}v_{0}\overset{y}{\longleftarrow}v_{0}
\]
is labeled by $L\left(P\right)=xyyy^{-1}=xy$. Similarly, in the second
graph from the right the path 
\[
P:=v_{0}\overset{x}{\longrightarrow}v_{1}\overset{y}{\longrightarrow}v_{2}\overset{x}{\longleftarrow}v_{3}\overset{y}{\longleftarrow}v_{0}
\]
is labeled by $\pi_{1}\left(P\right)=xyx^{-1}y^{-1}=\left[x,y\right]$. 

The images $\hat{\varphi}\left(\Gamma,v\right)$ for the graphs in
this figure from left to right are $\left\langle xy,xy^{2},y\right\rangle =\left\langle x,y\right\rangle ,\;\left\langle y,xyx^{-1}\right\rangle $,
$\left\langle xyx^{-1}y^{-1}\right\rangle $ and $\left\langle x,y\right\rangle $.
Note that the fundamental group of the left most graph is free of
rank $3$ (there are 3 loops in the graph) while the image $\hat{\varphi}\left(\Gamma,v\right)=\left\langle x,y\right\rangle $
is generated by only two element, which in particular indicates that
it is not a Stallings graph.
\end{example}

\begin{figure}[H]
\begin{centering}
\includegraphics[scale=0.6]{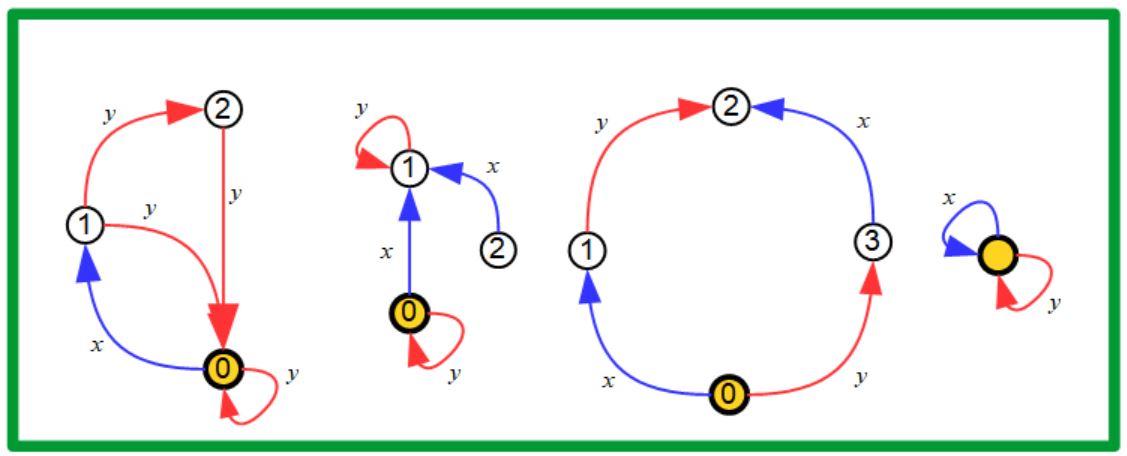}
\par\end{centering}
\caption{\label{fig:labeled_graphs}These are graphs labeled by $x,y$, where
$F_{2}=\left\langle x,y\right\rangle $ and the special vertices are
the yellow ones . The left most graph is not Stallings because it
has two $y$ labeled edges coming out of the same vertex. The rest
are Stallings graphs where the right most graph is $\Gamma_{F_{2}}$.}
\end{figure}

\subsection{\label{subsec:Finding-good-generators}Finding good generators}

Our final task is to construct graph covering, and to choose generators
which are small (so that $M$ in \lemref{girth} will be small). In
this section we will use the infinity norm, and just write $\norm{\cdot}$
instead of $\norm{\cdot}_{\infty}$. We will keep $S$ as the set
\[
S=\left\{ A,B\right\} ,\quad A=\left(\begin{array}{cc}
1 & 2\\
0 & 1
\end{array}\right),\;B=\left(\begin{array}{cc}
1 & 0\\
2 & 1
\end{array}\right)
\]
and we will look for free subgroups in $\left\langle S\right\rangle $.
As such, all of our graphs will be $S$ labeled (with our usual convention
for $A^{-1},B^{-1}$ labeled edges).

All of the elements in $S\cup S^{-1}$ are very ``similar'' and
in particular have the same norm. To make this even more precise,
we consider the following.
\begin{defn}
For $g\in\SL_{2}\left(\ZZ\right)$ define $\sigma\left(g\right)=\left(g^{-1}\right)^{T}$
and $\tau\left(g\right)=\left(\begin{array}{cc}
0 & 1\\
1 & 0
\end{array}\right)g\left(\begin{array}{cc}
0 & 1\\
1 & 0
\end{array}\right)$.
\end{defn}

\begin{lem}
The elements $\sigma$ and $\tau$ are commuting automorphisms of
$\SL_{2}\left(\RR\right)$ and each has order 2, so that $\left\langle \sigma,\tau\right\rangle =\left\{ e,\sigma,\tau,\sigma\tau\right\} $.
Moreover, $S\cup S^{-1}$ is a single orbit of $\left\langle \sigma,\tau\right\rangle $
and for any $g\in\SL_{2}\left(\RR\right)$ we have that $\norm g=\norm{\tau\left(g\right)}=\norm{\sigma\left(g\right)}$.
\end{lem}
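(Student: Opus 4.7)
My plan is to verify each assertion by direct computation, splitting the work into four small parts: (i) $\sigma$ and $\tau$ are automorphisms of $\SL_2(\RR)$, (ii) each has order $2$, (iii) they commute, and (iv) $S\cup S^{-1}$ is a single $\langle\sigma,\tau\rangle$-orbit with both maps preserving the infinity norm.

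For (i), I would note that $\sigma(g)=(g^{-1})^T$ is the composition of inversion (antihomomorphism) and transpose (antihomomorphism), so it is a homomorphism; it preserves $\det$ since $\det((g^{-1})^T)=\det(g)^{-1}=1$ on $\SL_2$. For $\tau$, the key identity is $J^2=I$ where $J=\left(\begin{smallmatrix}0&1\\1&0\end{smallmatrix}\right)$, so $\tau(gh)=JghJ=(JgJ)(JhJ)=\tau(g)\tau(h)$, and since $\det(J)^2=1$ it preserves $\SL_2$. For (ii), $\sigma(\sigma(g))=((((g^{-1})^T)^{-1})^T=g$ and $\tau(\tau(g))=J^2gJ^2=g$. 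For (iii), compute
\[
\sigma(\tau(g))=(Jg^{-1}J)^T=J^T(g^{-1})^TJ^T=J(g^{-1})^TJ=\tau(\sigma(g)),
\]
using $J^T=J$; hence $\langle\sigma,\tau\rangle$ is a Klein four-group.

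For (iv), I would just compute the orbit of $A$ explicitly. Using $A^{-1}=\left(\begin{smallmatrix}1&-2\\0&1\end{smallmatrix}\right)$, one finds
\[
\sigma(A)=(A^{-1})^T=\begin{pmatrix}1&0\\-2&1\end{pmatrix}=B^{-1},\qquad \tau(A)=JAJ=\begin{pmatrix}1&0\\2&1\end{pmatrix}=B,
\]
and then $\sigma\tau(A)=\sigma(B)=A^{-1}$. So the orbit is exactly $\{A,A^{-1},B,B^{-1}\}=S\cup S^{-1}$.

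For the norm claim, writing $g=\left(\begin{smallmatrix}a&b\\c&d\end{smallmatrix}\right)\in\SL_2(\RR)$, the cofactor formula gives $g^{-1}=\left(\begin{smallmatrix}d&-b\\-c&a\end{smallmatrix}\right)$, so the entries of $\sigma(g)=(g^{-1})^T=\left(\begin{smallmatrix}d&-c\\-b&a\end{smallmatrix}\right)$ are a signed permutation of the entries of $g$, giving $\norm{\sigma(g)}_\infty=\norm{g}_\infty$. Similarly, conjugation by $J$ just permutes the four matrix entries (swapping diagonal with anti-diagonal), so $\norm{\tau(g)}_\infty=\norm{g}_\infty$. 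I do not expect any real obstacle here; the only care needed is to remember that the norm preservation for $\sigma$ relies crucially on $\det g=1$ (so that the cofactors equal the entries up to sign), which is why the statement is restricted to $\SL_2(\RR)$ rather than all of $M_2(\RR)$.
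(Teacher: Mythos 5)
Your computations are correct and complete, and since the paper leaves this lemma as an exercise, your direct verification (homomorphism property of $\sigma$ and $\tau$, involutivity, commutation via $J^{T}=J$, the explicit orbit $\{A,B,A^{-1},B^{-1}\}$, and the entry-permutation argument for the infinity norm using the cofactor formula with $\det g=1$) is exactly the intended argument.
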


\begin{proof}
Left as an exercise.
\end{proof}
We now use this symmetry to construct good free subgroups of $\left\langle S\right\rangle $.
The main idea will be to start with the 4-regular tree $Cay\left(\SL_{2}\left(\ZZ\right),S\right)$,
and for a fixed $R$ to look on the subgraph of the matrices $g$
with $\norm g\leq R$. We will then complete this graph to create
a covering graph of $\Gamma_{F_{2}}$. The bound on $\norm g$ will
imply that the max norm of our generators will be small, and then
we are left with the problem of counting how many elements satisfy
$\norm g\leq R$.
\begin{lem}
\label{lem:existance_free_groups}There exists an absolute constant
$C$ such that for any $R>0$ there exists a free subgroup $F=F_{R}\leq\left\langle S\right\rangle $
over a symmetric set $W=W_{R}$ satisfying:
\begin{enumerate}
\item \label{enu:tau_invariance}$W=\sigma\left(W\right)=\tau\left(W\right)$.
\item \label{enu:norm_bound}${\displaystyle \max_{w\in W}}\norm w\leq18R^{2}$.
\item \label{enu:lower_bound}$\left|W\right|\geq CR^{2}$.
\item \label{enu:distinct}The elements of $W$ are distinct mod $p$ for
$p>36\cdot R^{2}$.
\end{enumerate}
\end{lem}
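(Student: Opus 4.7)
The plan is to realize $F_R$ as the fundamental group of a finite covering graph of the bouquet $\Gamma_{F_{2}}$ built out of matrices of bounded norm. Set $B_{R}:=\left\{ g\in\left\langle S\right\rangle :\norm g_{\infty}\leq R\right\} $ and let $B_{R}'$ be the connected component of the identity in the induced subgraph of $Cay\left(\left\langle S\right\rangle ,S\cup S^{-1}\right)$ on $B_{R}$. Since $\left\langle S\right\rangle $ is free, this ambient Cayley graph is a tree, so $B_{R}'$ is automatically a subtree rooted at $I$. I then complete the labeled subgraph on $B_{R}'$ to a covering $\Gamma$ of $\Gamma_{F_{2}}$ as follows: for each label $s\in S\cup S^{-1}$, match bijectively the vertices $v\in B_{R}'$ with $vs\notin B_{R}'$ (missing an out-$s$ edge) to those $u\in B_{R}'$ with $us^{-1}\notin B_{R}'$ (missing an in-$s$ edge), and insert an edge $v\overset{s}{\to}u$ for each matched pair. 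A double counting shows the two classes have equal cardinality. The matching can be chosen to commute with $\sigma$ and $\tau$, because both preserve $\norm{\cdot}_{\infty}$ (hence $B_{R}'$) and permute $S\cup S^{-1}$.

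To extract the basis, I take the spanning tree of $\Gamma$ to be $B_{R}'$ itself. Every non-tree edge of $\Gamma$ is then an edge inserted during the completion, and by Corollary \ref{cor:cycle_basis} the fundamental group is free with a basis indexed by the non-tree edges. Because the path from $I$ to any vertex $g$ in the Cayley subtree spells exactly the matrix $g$, the basis element attached to a non-tree edge $v\overset{s}{\to}u$ is the product $v\cdot s\cdot u^{-1}\in\SL_{2}\left(\ZZ\right)$. Let $W_{R}$ be the symmetric closure of this basis, and set $F_{R}:=\left\langle W_{R}\right\rangle $.

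The four properties of the lemma now follow. For \enuref{tau_invariance}: the spanning tree and the matching were chosen $\left\langle \sigma,\tau\right\rangle $-equivariantly, and since $\sigma,\tau$ are group automorphisms of $\SL_{2}\left(\ZZ\right)$, the set $W_{R}$ is invariant under them. For \enuref{norm_bound}: two applications of the sub-multiplicativity of $\norm{\cdot}_{\infty}$ on $2\times 2$ matrices combined with $\norm v_{\infty},\norm u_{\infty}\leq R$ and $\norm s_{\infty}\leq 2$ yield $\norm{v\,s\,u^{-1}}_{\infty}\leq 18R^{2}$, with slack. For \enuref{lower_bound}: the rank is $\left|E\left(\Gamma\right)\right|-\left|V\left(\Gamma\right)\right|+1=\left|B_{R}'\right|+1$, so $\left|W_{R}\right|=2\left(\left|B_{R}'\right|+1\right)\geq C\,R^{2}$ once $\left|B_{R}'\right|\geq c\,R^{2}$ is established. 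For \enuref{distinct}: if $w\neq w'$ in $W_{R}$, then $w-w'$ is a nonzero integer matrix with entries of absolute value at most $36R^{2}$, so congruence mod $p>36R^{2}$ forces equality.

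The main obstacle is the lower bound $\left|B_{R}'\right|\geq c\,R^{2}$ in \enuref{lower_bound}. Counting $\left|B_{R}\right|$ itself is a standard lattice estimate --- the number of matrices in $\SL_{2}\left(\ZZ\right)$ with $\norm{\cdot}_{\infty}\leq R$ is $\Theta\left(R^{2}\right)$, and the same up to constants holds for the finite-index subgroup $\left\langle S\right\rangle $. What needs genuine work is the passage from $B_{R}$ to $B_{R}'$: one must rule out the possibility that most matrices of $B_{R}$ are isolated from the identity by paths that temporarily leave the ball. The cleanest route is to establish a monotonicity principle along reduced words --- for every reduced $g=s_{i_{1}}\cdots s_{i_{n}}$ in $\left\{ A^{\pm1},B^{\pm1}\right\} $ the prefix norms $\norm{s_{i_{1}}\cdots s_{i_{k}}}_{\infty}$ are non-decreasing in $k$ --- which would give $B_{R}'=B_{R}$ outright and in turn should follow from a ping-pong style argument exploiting the specific form of $A$ and $B$.
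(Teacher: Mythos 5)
Your overall architecture is the same as the paper's (truncate the $4$-regular tree $Cay\left(\left\langle S\right\rangle ,S\right)$ at norm $R$, complete it $\left\langle \sigma,\tau\right\rangle $-equivariantly to a Stallings/covering graph of $\Gamma_{F_{2}}$, read off a symmetric free basis from the non-tree edges, bound norms by a product of two matrices of norm $O\left(R\right)$, and get distinctness mod $p$ from the size of the entries), but there is a genuine gap exactly where you flag it: property (\enuref{lower_bound}). Your basis size is $2\left(\left|B_{R}'\right|+1\right)$ where $B_{R}'$ is the connected component of the identity inside $\Omega_{R}=\left\{ g\in\left\langle S\right\rangle :\norm g_{\infty}\leq R\right\} $, and the lattice count (the appendix, or \cite{gorodnik_counting_2009}) only bounds $\left|\Omega_{R}\right|$, not $\left|B_{R}'\right|$. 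The ``monotonicity principle'' you invoke --- that prefix norms of reduced words in $A^{\pm1},B^{\pm1}$ are non-decreasing, so $B_{R}'=\Omega_{R}$ --- is precisely the nontrivial statement needed, and you do not prove it; ``should follow from a ping-pong style argument'' is not an argument (the naive induction fails without tracking which column dominates and with what signs). The paper dodges this for the count by a different choice of truncation: it takes $\Gamma'$ to be the \emph{smallest connected} subtree containing $\Omega_{R}$, so $V'\supseteq\Omega_{R}$ holds by definition and $\left|V'\right|\geq CR^{2}$ is immediate from the lattice count; the connectivity claim ($V'=\Omega_{R}$) is then only needed for the norm bound (\enuref{norm_bound}), and the paper asserts it parenthetically. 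So with your set-up the unproved claim blocks the key quantitative conclusion, whereas with the paper's set-up it only affects the (more forgiving) norm constant; either way, if you want a self-contained proof you must actually establish this connectivity/monotonicity statement or restructure as the paper does.

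A second, smaller issue: your completion step asserts that the label-by-label matching between vertices missing an out-$s$ edge and vertices missing an in-$s$ edge ``can be chosen to commute with $\sigma$ and $\tau$,'' justified only by the fact that $\sigma,\tau$ preserve the truncated tree and permute $S\cup S^{-1}$. Equal cardinalities do not by themselves give an equivariant bijection; since $\left\langle \sigma,\tau\right\rangle $ acts simply transitively on $\left\{ A^{\pm1},B^{\pm1}\right\} $, equivariance reduces to choosing $\phi_{A}$ so that $\sigma\tau\circ\phi_{A}$ is an involution of the out-$A$-deficient set, which you should state and verify (e.g.\ the explicit choice of matching $v$ to $\sigma\tau\left(v\right)$ works, producing generators of the form $vs\,\tau\left(v\right)^{T}$ with $\norm{vs\,\tau\left(v\right)^{T}}\leq18R^{2}$). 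The paper avoids this bookkeeping by a slightly different completion: for each missing edge pair $\left\{ \left(g,gs\right),\left(\sigma\left(g\right),\sigma\left(gs\right)\right)\right\} $ it inserts a new midpoint vertex and the path $g\overset{s}{\to}v_{g,s}\overset{s^{T}}{\to}\sigma\left(g\right)$, which makes the construction $\sigma,\tau$-equivariant by fiat and yields the symmetric generators $\left(gs\right)\left(gs\right)^{T}$; the price is a Stallings graph rather than a full cover and a basis of size $\frac{\left|V'\right|+1}{2}$ instead of your $2\left(\left|B_{R}'\right|+1\right)$, which is immaterial for the lemma.
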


\begin{proof}
Let $\Gamma''$ be the connected component of the identity in $Cay\left(\SL_{2}\left(\ZZ\right),S\right)$
which is a 4-regular tree with the natural $S$ labeling. Given $R>0$,
let $\Omega_{R}=\left\{ g\in\left\langle S\right\rangle :\norm g\leq R\right\} $
and set $\Gamma'=\left(V',E'\right)$ to be the smallest connected
subgraph of $\Gamma''$ which contains $\Omega_{R}$ (actually, it
can be shown that $V'=\Omega_{R}$). Since $\sigma$ and $\tau$ preserve
both $S^{\pm1}$ and the norm, it also acts on $\Gamma'$. The graph
$\Gamma'$ is an $S$ labeled tree and we want to complete it to be
a covering of $\Gamma_{F_{2}}$.

Let $g\in V'$ and $s\in S\cup S^{-1}$ such that $\left(g,gs\right)\notin E'$,
or equivalently $gs\notin V'$. Applying $\sigma$ we obtain that
$\sigma\left(g\right)\in V'$ while $\left(\sigma\left(g\right),\sigma\left(g\right)\sigma\left(s\right)\right)\notin E'$.
For each such pair $\left\{ \left(g,gs\right),\left(\sigma\left(g\right),\sigma\left(gs\right)\right)\right\} $
we add to $\Gamma'$ the vertex $v_{g,s}$, and two labeled edges
$g\overset{s}{\to}v_{g,s}\overset{s^{T}}{\to}g^{-T}$. In addition,
define $C_{g,s}$ to be the cycle defined by $e\rightsquigarrow g\overset{s}{\to}v_{g,s}\overset{s^{T}}{\to}\sigma\left(g\right)\rightsquigarrow e$
where the $\rightsquigarrow$ arrow is the unique path in the tree
$\Gamma'$. We let $\Gamma=\left(V,E\right)$ be the new graph after
doing this construction for each such pair, which construction is
a Stallings graph. Letting $W_{+}$ be the collection of cycles which
we just constructed, it is easily seen to be a cycle basis for $\Gamma$,
and therefore they generate $\pi_{1}\left(\Gamma\right)$, and we
need to show that $W_{R}:=W_{+}\cup W_{+}^{-1}$ satisfy the conditions
in this lemma. 

Note first that since by construction $\Gamma$ is a Stallings graph,
we can identify these cycles with elements in $\left\langle S\right\rangle $
via their labels, namely we identify $C_{g,s}\sim L\left(C_{g,s}\right)=\left(gs\right)\left(gs\right)^{T}$.
Since $\sigma$ and $\tau$ preserve the norm, by the definition of
these cycles we get that $W_{R}$ is invariant under $\sigma$ and
$\tau$ which is condition (\enuref{tau_invariance}).

Secondly, by definition for each cycle $C_{g,s}$ we have that $\norm{gs}\leq3\norm g\leq3R$,
so that 

\[
\norm{\left(gs\right)\left(gs\right)^{T}}\leq2\norm{gs}^{2}\leq18R^{2},
\]
which is part (\enuref{norm_bound}).

Next we want to find the size of $W_{R}$. For that, note first that
each edge in $E$ touches a vertex from $V'$ and each such vertex
in $V'$ has exactly two outgoing edges (labeled by $A$ and $B$
respectively), hence $\left|E\right|=2\left|V'\right|$. Recall that
$\Gamma'$ is a tree so it has $\left|E'\right|=\left|V'\right|-1$
edges, and any cycle $C_{g,s}$ uses exactly 2 edges in $E\backslash E'$.
Finally, two distinct cycles use different edges, so the number of
these cycles is 
\[
\frac{\left|E\right|-\left|E'\right|}{2}=\frac{2\left|V'\right|-\left(\left|V'\right|-1\right)}{2}=\frac{\left|V'\right|+1}{2},
\]
so we are left to count the number of vertices in $V'$ which is exactly
the number of matrices in $\left\langle S\right\rangle $ with norm
$\leq R$.

It is well known that $\SL_{2}\left(\ZZ\right)$ is a lattice in $\SL_{2}\left(\RR\right)$,
namely it is discrete and $\nicefrac{\SL_{2}\left(\RR\right)}{\SL_{2}\left(\ZZ\right)}$
has finite volume. Moreover, since $\left[\SL_{2}\left(\ZZ\right):\left\langle S\right\rangle \right]=12<\infty$,
the group $\left\langle S\right\rangle $ is also a lattice in $\SL_{2}\left(\RR\right)$.
By \cite{gorodnik_counting_2009} there is some absolute constant
$C$ such that $\left|V'\right|\geq CR^{2}$, which finishes part
(\enuref{lower_bound}). For completeness, we added an elementary
proof for this lower bound in the \secref{lattice_counting}.

Finally, assume that $w_{1},w_{2}\in W$ such that $w_{1}\equiv_{p}w_{2}$
where $2\cdot18R^{2}<p$. Since \\
$\norm{w_{1}-w_{2}}_{\infty}\leq2\cdot18R^{2}<p$, we must have that
$w_{1}=w_{2}$ which completes part (\enuref{distinct}) and the proof.
\end{proof}
\begin{rem}
As we shall see later, the condition that $W=\tau\left(W\right)$
is helpful when considering a possible extension of the results in
this section. If we ignore this condition, then there are many ways
to close the cycles in $\left(V',E'\right)$ which might have different
properties.
\end{rem}

Now that we have a way to construct a set of generators $W$ which
has small elements on the one hand (condition (\enuref{norm_bound})
above) and on the other hand $\left|W\right|$ is large (condition
(\enuref{lower_bound})), we can consider the family of graphs that
it creates when taken mod $p$, and use it to prove \thmref{main}.
\begin{proof}[Proof of \thmref{main}]
Let $W_{R},C$ be as in \lemref{existance_free_groups}. Note first
that $n\leq\left|\SL_{2}\left(\FF_{p}\right)\right|=p^{3}-p\leq p^{3},$and
more over by \lemref{existance_free_groups} we get that 
\[
M={\displaystyle \max_{w\in W}}\norm w\leq18R^{2}\leq18\frac{\left|W_{R}\right|}{C}.
\]
Assuming that $\pi_{p}$ is injective on $W_{R}$ (which is true for
$p>36R^{2}$) we get that $Cay\left(\SL_{2}\left(\FF_{p}\right),W_{R}\right)$
is $\left|W_{R}\right|$-regular graph on $n$ vertices. Applying
\lemref{girth} we obtain that 
\[
\ceil{\frac{\mathfrak{g}}{2}}\geq\frac{\ln\left(p\right)}{\ln\left(2M\right)}=\frac{1}{3}\frac{\ln\left(p^{3}\right)}{\ln\left(2M\right)}\geq\frac{1}{3}\frac{\ln\left(n\right)}{\ln\left(\frac{36}{C}\left|W_{R}\right|\right)}=\frac{1}{3}\frac{\ln\left(n\right)}{\ln\left(\left|W_{R}\right|\right)+\ln\left(\tilde{C}\right)},\;\tilde{C}=\frac{36}{C}.
\]
\end{proof}
As we mentioned before, since $\left|W_{R}\right|\to\infty$ as $R\to\infty$,
the term $\ln\left(\tilde{C}\right)$ is negligible, and also we can
move from from $\ln\left(d\right)=\ln\left|W_{R}\right|$ to $\ln\left(d-1\right)$
as in our discussion in the introduction. Hence, if we ignore this
``noise'' we get that $\mathfrak{g}\geq\frac{2}{3}\frac{\ln\left(n\right)}{\ln\left(d-1\right)}$.
While one can try to optimize the choice of norm and constants which
appear in the proof to get a tighter bound for some fixed $R$, our
interest is more in the asymptotic result, and the possible generalizations.

\section{Attempts at generalizations}

There are two directions at which one can try to generalize the results
from the previous section.

The first direction is to try and apply the same methods in $\SL_{n}\left(\ZZ\right)$
, $n\geq3$. Unlike the $n=2$ case, in $n\geq3$ there are no finite
index free groups in $\SL_{n}\left(\ZZ\right)$, so we cannot apply
the asymptotic growth result for lattices. Despite this, there are
many free subgroups in $\SL_{n}\left(\ZZ\right)$ which leads to the
following question about the lattice counting problem there.
\begin{problem}
Fix some $n\geq3$. Given $\varepsilon>0$, find a free subgroup $F_{\varepsilon}\leq\SL_{n}\left(\ZZ\right)$
and a constant $C_{\varepsilon}>0$ such that $\left|\left\{ g\in F_{\varepsilon}\ \mid\ \norm g_{\infty}\leq R\right\} \right|\geq C\mu_{n}\left(\left\{ g\in\SL_{n}\left(\RR\right)\ \mid\ \norm g_{\infty}\leq R\right\} \right)^{1-\varepsilon}$
where $\mu_{n}$ is the Haar measure of $\SL_{n}\left(\RR\right)$.
\end{problem}

\begin{rem}
Choosing two elements $g,h\in\SL_{n}\left(\ZZ\right)$ in random,
it is well known that with high probability they generate a free subgroup.
Furthermore, if $S$ is any symmetric set of generators of a free
group $F$, we obtain that $\norm{\prod_{1}^{k}s_{i}}_{\infty}\leq\left(n\cdot{\displaystyle \max_{1\leq i\leq k}}\norm{s_{i}}_{\infty}\right)^{k}$
for $s_{i}\in S$, so that 
\[
\left|\left\{ g\in F\ \mid\ \norm g_{\infty}\leq R\right\} \right|\gtrsim R^{\frac{\ln\left|S\right|}{\ln\left(n\right)+\ln\left(\max_{s\in S}\norm s_{\infty}\right)}}.
\]
In other words, the growth rate of any free group is at least polynomial
in $R$, and the problem is to find the best power of $R$ attainable.
Here, too, we can play with the choice of norm to get better exponents.
\end{rem}

The second problem with $n\geq3$, is that $\norm{g^{-1}}_{\infty}$
in general doesn't equal $\norm g_{\infty}$, and the trivial upper
bound is $\norm g_{\infty}^{-1}\leq\norm g_{\infty}^{n-1}\cdot\left(n-1\right)!$
using Cramer's rule, so a proper generalization should probably be
for the intersection $\norm g_{\infty}\leq R$ and $\norm{g^{-1}}_{\infty}\leq R$.\\

The second generalization is to use $\GL_{2}\left(\ZZ\right)$ and
$\PGL_{2}\left(\FF_{p}\right)$ instead of $\SL_{2}\left(\ZZ\right)$
and $\SL_{2}\left(\FF_{p}\right)$.

Let $W\subseteq\SL_{2}\left(\ZZ\right)$ be a symmetric set which
generates a free group and in addition assume that $\tau\left(W\right)=W$.
Letting $G\left(p\right)=\left\{ g\in\GL_{2}\left(\FF_{p}\right)\ \mid\ \det\left(p\right)=\pm1\right\} $,
we can consider the graphs $Cay\left(G\left(p\right),W\left(\begin{array}{cc}
0 & 1\\
1 & 0
\end{array}\right)\right)$. Since $\left[G\left(p\right):\SL_{2}\left(\FF_{p}\right)\right]=2$
and $W\left(\begin{array}{cc}
0 & 1\\
1 & 0
\end{array}\right)\subseteq G\left(p\right)-\SL_{2}\left(\FF_{p}\right)$, these graphs are bipartite. Moreover, a cycle in this graph corresponds
to the relation
\[
\prod_{1}^{2d}\left(w_{i}\left(\begin{array}{cc}
0 & 1\\
1 & 0
\end{array}\right)\right)=\prod_{1}^{d}\left(w_{2i-1}\left[\left(\begin{array}{cc}
0 & 1\\
1 & 0
\end{array}\right)w_{2i}\left(\begin{array}{cc}
0 & 1\\
1 & 0
\end{array}\right)\right]\right)=\prod_{1}^{d}\left(w_{2i-1}\tau\left(w_{2i}\right)\right)\equiv_{p}I
\]

The assumption that $W=\tau\left(W\right)$ implies that $girth\left(Cay\left(G\left(p\right),W\left(\begin{array}{cc}
0 & 1\\
1 & 0
\end{array}\right)\right)\right)=even-girth\left(Cay\left(\SL_{2}\left(p\right),W\right)\right)$, namely the length of the smallest even cycle in $Cay\left(\SL_{2}\left(p\right),W\right)$.
While clearly we have that $even-girth\left(Cay\left(\SL_{2}\left(p\right),W\right)\right)\leq2\cdot girth\left(Cay\left(\SL_{2}\left(p\right),W\right)\right)$,
if we can show that this bound is almost tight, we would obtain a
better family of graphs with high girth. In particular, if we can
show equality then the parameters $n,d,\mathfrak{g}$ of these graphs
will satisfy (asymptotically) $\mathfrak{g}\geq\frac{4}{3}\frac{\ln\left(n\right)}{\ln\left(d-1\right)}$.
This type of improvement is exactly what happens in the LPS graphs
in \cite{lubotzky_ramanujan_1988}.

\newpage{}

\appendix

\section{\label{sec:lattice_counting}Counting matrices from $\protect\SL_{2}\left(\protect\ZZ\right)$}

The aim of this appendix is to give a lower bound for the number of
$\SL_{2}\left(\ZZ\right)$ matrices in increasing balls, namely $\left|B_{R}\cap\SL_{2}\left(\ZZ\right)\right|$
where $B_{R}=\left\{ M\in M_{2}\left(\RR\right)\;\mid\;\norm M_{\infty}\leq R\right\} $.
This type of lattice counting problems are quite common in the literature,
and usually the idea is that since $\SL_{2}\left(\ZZ\right)$ is a
lattice in $\SL_{2}\left(\RR\right)$, then the number of lattice
points in increasing balls in $\SL_{2}\left(\RR\right)$ behaves like
the growth of the volumes of these balls (or the simpler case, which
is Gauss circle problem: the number of integer points inside growing
circle grows like $\pi r^{2}$ - the area of the circles). Note however
that the ball is usually defined using an invariant metric while we
use the infinity metric which is not $\SL_{2}\left(\RR\right)$-invariant.
This specific counting problem can still be solved using similar tools
(see for example \cite{gorodnik_counting_2009}), but for the reader's
convenience we added an elementary proof for the lower bound of $\SL_{2}\left(\ZZ\right)$-points
in increasing balls needed in this paper.\\

Recall that $\left(n,m\right)\in\ZZ^{2}$ is called primitive if $gcd\left(n,m\right)=1$.
We begin with the observation that such $\left(n,m\right)$ can be
completed to {\footnotesize{}$\left(\begin{array}{cc}
n & a\\
m & b
\end{array}\right)\in\SL_{2}\left(\ZZ\right)$} if and only if it is primitive. Clearly, if it is part of a matrix
in $\SL_{2}\left(\ZZ\right)$, then it is primitive. On the other
hand if $\left(n,m\right)$ is primitive, we can find $a,b\in\ZZ$
such that $nb-ma=1$ so that {\footnotesize{}$\left(\begin{array}{cc}
n & a\\
m & b
\end{array}\right)\in\SL_{2}\left(\ZZ\right)$}. In other words, the set of primitive vectors is exactly the orbit
$\SL_{2}\left(\ZZ\right)e_{1}$ where $e_{1},e_{2}$ is the standard
basis for $\RR^{2}$.

The second observation is that once we find such an $\left(a,b\right)$
any other completion to a basis is of the form $\left(a,b\right)+k\left(n,m\right)$
for some $k\in\ZZ$. One can see check this directly or note that
if $\left(n,m\right)^{T}=ge_{1}$ for some $g\in\SL_{2}\left(\ZZ\right)$,
then $\left(a,b\right)^{T}=ge_{2}$. Any other $g'\in\SL_{2}\left(\ZZ\right)$
which satisfies $\left(n,m\right)^{T}=g'e_{1}$ must be of the form
$gh$ with $he_{1}=e_{1}$, namely{\footnotesize{} $h=\left(\begin{array}{cc}
1 & k\\
0 & 1
\end{array}\right)$}, and therefore 
\[
g'e_{2}=g\left(ke_{1}+e_{2}\right)=\left(a,b\right)^{T}+k\left(m,n\right)^{T}.
\]

Let us interpret these observations geometrically. Suppose that we
have a primitive vector $\left(n,m\right)\in\ZZ^{2}$, and let $L_{\left(n,m\right)}=\left\{ \left(x,y\right)\in\RR^{2}\;\mid\;\left(x,y\right)\cdot\left(-m,n\right)=1\right\} $
be a line. The two observation tell us that this line contain infinitely
many integral points which complete $\left(n,m\right)$ to a matrix
in $\SL_{2}\left(\ZZ\right)$, and we can move from one solution to
the other by adding $\left(n,m\right)$. 

\begin{figure}[H]
\begin{centering}
\includegraphics[scale=0.4]{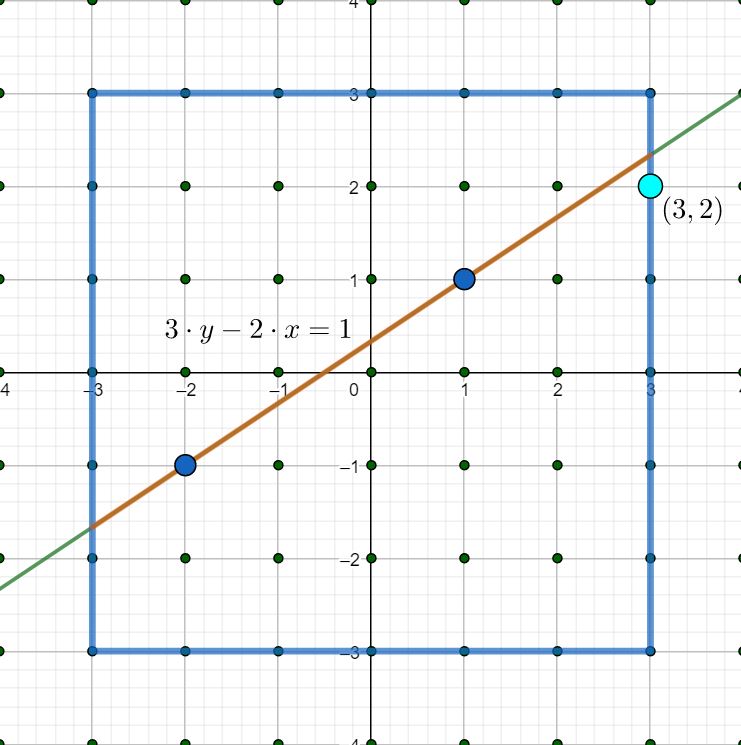}
\par\end{centering}
\caption{\label{fig:primitive}(Geogebra \cite{hohenwarter_geogebra:_2002})
The point $\left(3,2\right)$ and the line corresponding to completion
of $\left(3,2\right)$ to a matrix in $\protect\SL_{2}\left(\protect\RR\right)$.}
\end{figure}
As can be seen in the image, the intersection of the line $L_{\left(3,2\right)}$
with the square of radius $\norm{\left(3,2\right)}_{\infty}$ is a
translation of the segment $\left[-\left(3,2\right),\left(3,2\right)\right]$.
Since we can move from one integral point on this line to the next
by adding $\left(3,2\right)$, this intersection must contain two
integral points, which have infinity norm at most $\norm{\left(3,2\right)}_{\infty}$.
This idea allows us to prove the following.
\begin{defn}
Define 
\[
prim\left(R\right)=\left\{ \left(n,m\right)\in\NN^{2}\;\mid\;gcd\left(n,m\right)=1,\;\left|n\right|,\left|m\right|\leq R\right\} .
\]
\end{defn}

\begin{lem}
For every $R>0$ we have that $\left|B_{R}\cap\SL_{2}\left(\ZZ\right)\right|\geq prim\left(R\right)$.
\end{lem}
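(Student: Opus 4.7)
The plan is to construct an explicit injection $\Phi : prim(R) \hookrightarrow B_R \cap \SL_2(\ZZ)$ sending each primitive pair $(n,m)$ to a well-chosen matrix with first column $(n,m)^T$. Injectivity is automatic because distinct $(n,m)$ produce matrices with distinct first columns, so the whole content is to show that for every primitive $(n,m)$ with $\max(|n|,|m|) \le R$, one can pick a completion $(a,b) \in \ZZ^2$ with $nb - ma = 1$ and $\max(|a|,|b|) \le R$.

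For the generic case $n, m \ge 1$, I would invoke the two observations recorded immediately before the lemma: Bezout gives an initial completion $(a_0,b_0)$ with $nb_0 - ma_0 = 1$, and every other completion is of the form $(a_0 + kn,\, b_0 + km)$ for some $k \in \ZZ$. I would then choose $k$ so that the reduced completion $(a,b) := (a_0 + kn,\, b_0 + km)$ satisfies $0 \le a \le n-1$, which forces $|a| \le n - 1 < R$ immediately. Substituting back into $nb = 1 + ma$ yields $\tfrac{1}{n} \le b \le \tfrac{1 + m(n-1)}{n} = m + \tfrac{1-m}{n} \le m$ (using $m \ge 1$), so integrality of $b$ forces $1 \le b \le m \le R$. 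Therefore
\[
\left(\begin{array}{cc} n & a \\ m & b \end{array}\right) \in B_R \cap \SL_2(\ZZ),
\]
and I set $\Phi(n,m)$ to be this matrix. The remaining edge cases, in which one of $n,m$ vanishes, reduce by primitivity to $(n,m) \in \{(1,0),(0,1)\}$, and a permutation-type completion of infinity norm $1$ handles each.

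Geometrically, this matches the picture in Figure~\figref{primitive}: the intersection of the line $L_{(n,m)}$ with the square $[-R,R]^2$ is long enough in the direction $(n,m)$ to contain at least one integer completion, because the parameter range coming from the $x$-coordinate alone has length $2R/n \ge 2$. I do not foresee a serious obstacle; the argument is entirely elementary. The only subtle point is that the reduced $b$ must be \emph{strictly} positive, and this is immediate from $nb = 1 + ma \ge 1$.
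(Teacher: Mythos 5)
Your proposal is correct and follows essentially the same route as the paper: both arguments complete each primitive $(n,m)$ with $\norm{(n,m)}_{\infty}\leq R$ to a matrix in $\SL_{2}\left(\ZZ\right)$ using the fact that all completions differ by integer multiples of $(n,m)$, select one completion of infinity norm at most $R$, and conclude by injectivity of the first column. Your version merely replaces the paper's geometric line-in-the-square picture with the equivalent algebraic normalization $0\leq a\leq n-1$, $1\leq b\leq m$, and handles the same degenerate cases separately.
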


\begin{proof}
The main idea already appears in the sketch above, and we only need
to show that the image above is what really happens for any primitive
vector $\left(n,m\right)$. By switching $n$ and $m$ and multiplying
by $-1$ if need, we may assume that $0\leq m\leq n$. In this case
our line is $yn-xm=1$ so that {\footnotesize{}$\left(\begin{array}{cc}
n & x\\
m & y
\end{array}\right)\in\SL_{2}\left(\ZZ\right)$}. Because $n\neq0$, we can write it as $y=\frac{1+xm}{n}$. In particular,
the intersection of this line with $x=n$ is at $\left(n,\frac{1}{n}+m\right)$.
If $n>m$, then $m<m+\frac{1}{n}<n$ we the result is as in \figref{primitive}.
If $n=m$, then we must have that $n=m=1$, but then we can complete
$\left(1,1\right)$ with $\left(1,0\right)$, which again have norm
$\norm{\left(1,0\right)}_{\infty}=\norm{\left(1,1\right)}_{\infty}$.
In any way, we showed that if $\left(n,m\right)$ is primitive, we
can complete it to a matrix in $\SL_{2}\left(\ZZ\right)$ with a vector
$\left(a,b\right)$ such that $\norm{\left(a,b\right)}_{\infty}\leq\norm{\left(n,m\right)}_{\infty}$,
and this map from a primitive vector to $\SL_{2}\left(\ZZ\right)$
implies that $prim\left(R\right)\leq\left|B_{R}\cap\SL_{2}\left(\ZZ\right)\right|$.
\end{proof}
\begin{rem}
The idea for the lemma above can be also used to give an upper bound
$\left|B_{R}\cap\SL_{2}\left(\ZZ\right)\right|\leq C\cdot prim\left(R\right)$
for some $C>0$. However, this direction is a little bit more involved,
since while many of the primitive vectors $v$ of length $\norm v\leq R$
have very few completion to matrices with vectors of length $\leq R$,
if $\norm v$ is very small it can have many completions. For example,
we can complete $\left(1,0\right)$, with all the vectors of the form
$\left(k,1\right)$ with $k\leq R$ - which grows linearly in $R$.
As we do not require the upper bound for this paper, we leave it as
an exercise to the interested reader.
\end{rem}

We are now left with the problem of counting primitive vectors in
an increasing balls. This is a well known result which can be done
elementarily using the inclusion exclusion principle.
\begin{lem}
\label{lem:primitive}We have $\limfi N{\infty}\frac{\left|prim\left(N\right)\right|}{N^{2}}=4\cdot\zeta\left(2\right)^{-1}=4\cdot\frac{6}{\pi^{2}}$.
\end{lem}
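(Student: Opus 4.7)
The plan is to eliminate the coprimality constraint via Möbius inversion, reducing the problem to counting integer points in axis-aligned boxes and then evaluating a convergent Dirichlet series.

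Interpreting the domain as $\ZZ^{2}$ (which is what the factor of $4$ in the stated limit forces, since the pure $\NN^{2}$ count has density $1/\zeta(2)$), I apply the identity $\mathbf{1}[\gcd(n,m)=1]=\sum_{d\mid\gcd(n,m)}\mu(d)$, valid for $(n,m)\ne(0,0)$. Substituting into the definition of $prim(N)$, interchanging summations, and using the bijection between pairs with $d\mid n,\ d\mid m$ in $[-N,N]^{2}$ and arbitrary integer pairs in $[-N/d,N/d]^{2}$ gives
\[
|prim(N)|=\sum_{d=1}^{N}\mu(d)\cdot\#\{(n,m)\in\ZZ^{2}:d\mid n,\ d\mid m,\ |n|,|m|\le N\}=\sum_{d=1}^{N}\mu(d)\bigl(2\flr{N/d}+1\bigr)^{2}.
\]

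Next I expand $\bigl(2\flr{N/d}+1\bigr)^{2}=4N^{2}/d^{2}+O(N/d)$ uniformly in $d\le N$, divide by $N^{2}$, and use the bound $\sum_{d=1}^{N}1/d=O(\ln N)$ to obtain
\[
\frac{|prim(N)|}{N^{2}}=4\sum_{d=1}^{N}\frac{\mu(d)}{d^{2}}+O\!\left(\frac{\ln N}{N}\right).
\]
Letting $N\to\infty$, the error vanishes and the partial sums converge absolutely to $4\sum_{d=1}^{\infty}\mu(d)/d^{2}=4/\zeta(2)$ by the Euler product identity $\sum\mu(d)/d^{s}=1/\zeta(s)$.

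The only mildly technical point is the uniform control of the $O$-terms and of the tail $\sum_{d>N}\mu(d)/d^{2}$, both of which follow immediately from $|\mu(d)|\le 1$ and $\sum 1/d^{2}<\infty$; the contribution of $(0,0)$ and any boundary discrepancy are absorbed into the error. This establishes $\lim_{N\to\infty}|prim(N)|/N^{2}=4/\zeta(2)=24/\pi^{2}$, as claimed.
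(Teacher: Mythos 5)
Your argument is correct, and it is the standard Möbius-inversion form of the same sieve the paper uses, executed a bit differently. You substitute $\mathbf{1}[\gcd(n,m)=1]=\sum_{d\mid\gcd(n,m)}\mu(d)$ and sum over all moduli $d\le N$ at once, getting $\frac{|prim(N)|}{N^2}=4\sum_{d\le N}\mu(d)/d^2+O(\ln N/N)$ and then invoking $\sum_d \mu(d)/d^2=\zeta(2)^{-1}$; the paper instead sieves prime by prime, truncating to the first $M$ primes, bounding the neglected primes by the tail $\sum_{p\ge M}p^{-2}$, applying inclusion--exclusion over divisors of $P_M=\prod_1^M p_i$, and then choosing $M(N)=\tfrac{1}{2}\log_2 N$ so that the $2^{M+1}N$ rounding error is $o(N^2)$, finishing with the finite Euler product $\prod_{p\mid P_M}(1-p^{-2})\to\zeta(2)^{-1}$. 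Your route buys cleaner bookkeeping: there is no cutoff parameter to tune and no exponential-in-$M$ error term, only the uniform estimate $(2\flr{N/d}+1)^2=4N^2/d^2+O(N/d)$ and absolute convergence of $\sum\mu(d)/d^2$; the paper's version is a more hands-on inclusion--exclusion but needs the two-step truncation. Two small points you handled correctly but should keep explicit: the definition of $prim(N)$ in the paper literally says $\NN^2$, and the factor $4$ (equivalently the paper's reduction to the positive quadrant by symmetry) only makes sense with the $\ZZ^2$ reading you adopted; and your displayed identity $|prim(N)|=\sum_{d\le N}\mu(d)(2\flr{N/d}+1)^2$ is exact only after discarding $(0,0)$, whose contribution $\sum_{d\le N}\mu(d)=O(N)$ you rightly note is absorbed in the error.
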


\begin{proof}
The only primitive vector on the $x$ and $y$ axis are $\pm e_{1},\pm e_{2}$
and we have symmetry between the four quarters of the plane, so it
is enough to look on primitive vectors with positive coordinates.

Fix $N$ and for $P\in\NN$ set $U_{P}=\left[1,...,N\right]^{2}\cap P\ZZ^{2}$.
Then we want to find the size of $prim^{+}\left(N\right):=U_{1}\backslash\bigcup_{p}U_{p}$
where $p$ runs over the primes. We want to use the inclusion exclusion
principle to find the size of $prim^{+}\left(N\right)$, but we can
only do if the union was over only finitely many primes. For that,
let $P_{M}=\prod_{1}^{M}p_{i}$ be the product of the first $M$ primes,
then
\[
-\frac{1}{N^{2}}\sum_{p\nmid P_{M}}\left|U_{p}\right|\leq\frac{1}{N^{2}}\left(\left|U_{1}\backslash\bigcup_{p}U_{p}\right|-\left|U_{1}\backslash\bigcup_{p\mid P_{M}}U_{p}\right|\right)\leq0.
\]

Before doing the inclusion exclusion, note that 
\[
\frac{1}{N^{2}}\sum_{p\nmid P_{M}}\left|U_{p}\right|\leq\frac{1}{N^{2}}\sum_{p\geq M}\flr{\frac{N}{p}}^{2}\leq\sum_{p\geq M}\frac{1}{p^{2}}.
\]
The series $\sum_{p}\frac{1}{p^{2}}$ converge, so that $\sum_{p\geq M}\frac{1}{p^{2}}\to0$
as $M=M\left(N\right)\to\infty$. In this case we have 
\[
\lim_{N}\frac{1}{N^{2}}\left|prim\left(N\right)\right|=\lim_{N}\frac{1}{N^{2}}\left|U_{1}\backslash\bigcup_{p\mid P_{M}}U_{p}\right|,
\]
if the limits exist, and we can show this using the inclusion exclusion
principle.

Let $\mu$ be the M{\"o}bius function, namely $\mu\left(P\right)=\left(-1\right)^{k}$
if $P$ is a product of $k$ distinct primes and $\mu\left(P\right)=0$
otherwise. Since ${\displaystyle \bigcap_{p\mid P}}U_{p}=U_{P}$ by
definition, we get that $\left|U_{1}\backslash\bigcup_{p\mid P_{M}}U_{p}\right|=\sum_{P\mid P_{M}}\mu\left(P\right)\left|U_{P}\right|$
and $\left|U_{P}\right|=\flr{\frac{N}{P}}^{2}$, so that
\begin{align*}
\left|\left|U_{1}\backslash\bigcup_{p\mid P_{M}}U_{p}\right|-\sum_{P\mid P_{M}}\mu\left(P\right)\left(\frac{N}{P}\right)^{2}\right| & =\left|\sum_{P\mid P_{M}}\mu\left(P\right)\left(\flr{\frac{N}{P}}^{2}-\left(\frac{N}{P}\right)^{2}\right)\right|\leq2^{M+1}N.
\end{align*}
If we choose for example $M\left(N\right)=\frac{\log_{2}\left(N\right)}{2}$,
then $\frac{2^{M+1}N}{N^{2}}=\frac{2}{\sqrt{N}}\to0$, so we are left
with computing the limit for $\sum_{P\mid P_{M}}\mu\left(P\right)\left(\frac{1}{P}\right)^{2}$.
We can now write the last term as
\[
\sum_{P\mid P_{M}}\mu\left(P\right)\left(\frac{1}{P}\right)^{2}=\sum_{P\mid P_{M}}\prod_{p\mid P}\frac{\left(-1\right)}{p^{2}}=\prod_{p\mid P_{M}}\left(1-\frac{1}{p^{2}}\right).
\]
The limit of these products as $M\to\infty$ is also well known. Indeed,
we have that 
\[
\frac{\pi^{2}}{6}=\zeta\left(2\right)=\sum_{1}^{\infty}\frac{1}{n^{2}}=\prod_{p}\sum_{i=0}^{\infty}\frac{1}{p^{2i}}=\prod_{p}\frac{1}{1-1/p^{2}},
\]
so that $\prod_{p\mid P_{M}}\left(1-\frac{1}{p^{2}}\right)\to\zeta\left(2\right)^{-1}$,
which completes the proof.
\end{proof}
\begin{cor}
\label{thm:counting_matrices}For all $R$ big enough we have that
$\left|\SL_{2}\left(\ZZ\right)\cap B_{R}\right|\geq\frac{6}{\pi^{2}}R^{2}$.
\end{cor}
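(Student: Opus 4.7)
The plan is simply to chain together the two preceding results of the appendix. From the first lemma we have the pointwise bound $|B_R\cap\SL_2(\ZZ)|\geq |prim(R)|$ valid for every $R>0$. From \lemref{primitive} we have the asymptotic
\[
\lim_{R\to\infty}\frac{|prim(R)|}{R^2}=\frac{24}{\pi^2}.
\]
So the first step is to record that the target constant $\frac{6}{\pi^2}$ in the statement is strictly smaller than the limiting constant $\frac{24}{\pi^2}$; in fact there is a factor of $4$ of slack.

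The second step is to turn this strict inequality of limits into a statement valid for all large $R$. By the definition of limit, there exists $R_0$ such that for all $R\geq R_0$,
\[
\frac{|prim(R)|}{R^2}\geq \frac{24}{\pi^2}-\frac{18}{\pi^2}=\frac{6}{\pi^2},
\]
so $|prim(R)|\geq \frac{6}{\pi^2}R^2$. Combining with the first lemma yields $|\SL_2(\ZZ)\cap B_R|\geq \frac{6}{\pi^2}R^2$ for all $R\geq R_0$, which is exactly the claim.

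There is no genuine obstacle here: the corollary is intentionally stated with a weaker constant than what the two lemmas together actually give, presumably so that the user of the corollary (the proof of \lemref{existance_free_groups}, where only the existence of \emph{some} absolute constant $C$ with $|V'|\geq CR^2$ is needed) does not have to track the precise value. If one wanted to be more careful, one could replace $\frac{6}{\pi^2}$ by any constant strictly less than $\frac{24}{\pi^2}$ at the cost of enlarging the threshold $R_0$; the proof would be identical.
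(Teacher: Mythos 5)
Your proof is correct and is exactly the paper's argument: the paper's own proof of this corollary just says it is ``a combination of the two previous lemmas,'' and your write-up spells out precisely that combination, using the slack between the limiting constant $\frac{24}{\pi^{2}}$ from \lemref{primitive} and the stated constant $\frac{6}{\pi^{2}}$ to pass from the limit to a bound valid for all large $R$. Nothing further is needed.
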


\begin{proof}
This is just a combination of the two previous lemmas.
\end{proof}
Finally, we extend this result to finite index subgroups of $\SL_{2}\left(\ZZ\right)$.
\begin{cor}
Let $\Gamma\leq\SL_{2}\left(\ZZ\right)$ be a finite index subgroup.
Then there exists $C_{\Gamma}>0$ such that for all $R$ big enough
we have that $\left|\Gamma\cap B_{R}\right|\geq C_{\Gamma}R^{2}$.
\end{cor}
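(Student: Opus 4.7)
The plan is to reduce to the previous corollary \ref{thm:counting_matrices} by a coset decomposition combined with a pigeonhole argument. Since $\Gamma$ has finite index, write $n := [\SL_{2}(\ZZ):\Gamma]<\infty$ and choose coset representatives $g_{1},\dots,g_{n}\in\SL_{2}(\ZZ)$ with $\SL_{2}(\ZZ)=\bigsqcup_{i=1}^{n}\Gamma g_{i}$. Set
\[
M:=\max_{1\leq i\leq n}\norm{g_{i}^{-1}}_{\infty},
\]
which depends only on $\Gamma$ (not on $R$).

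For any $R>0$ the decomposition restricts to a disjoint union
\[
\SL_{2}(\ZZ)\cap B_{R}=\bigsqcup_{i=1}^{n}\bigl(\Gamma g_{i}\cap B_{R}\bigr).
\]
Applying \corref{counting_matrices} on the left and pigeonhole on the right, for $R$ large enough there exists an index $i_{0}=i_{0}(R)$ with
\[
\left|\Gamma g_{i_{0}}\cap B_{R}\right|\geq\frac{1}{n}\left|\SL_{2}(\ZZ)\cap B_{R}\right|\geq\frac{6}{n\pi^{2}}R^{2}.
\]
Right-multiplication by $g_{i_{0}}^{-1}$ is an injection $\Gamma g_{i_{0}}\cap B_{R}\hookrightarrow\Gamma$. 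Using that the max norm on $M_{2}(\RR)$ satisfies $\norm{AB}_{\infty}\leq 2\norm{A}_{\infty}\norm{B}_{\infty}$ (this is the $\eta(\norm{\cdot}_{\infty})=2$ estimate already used in the paper), every element of the image has norm at most $2MR$. Therefore
\[
\left|\Gamma\cap B_{2MR}\right|\geq\frac{6}{n\pi^{2}}R^{2}.
\]
Setting $R':=2MR$ gives $\left|\Gamma\cap B_{R'}\right|\geq C_{\Gamma}R'^{2}$ with $C_{\Gamma}=\dfrac{3}{2n\pi^{2}M^{2}}$, valid whenever $R'\geq 2MR_{0}$ for the threshold $R_{0}$ coming from \corref{counting_matrices}.

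There is no real obstacle here: the finite-index hypothesis supplies both the absolute norm-expansion constant $M$ and the finite pigeonhole, and the rest is bookkeeping. The one minor subtlety is that the chosen coset index $i_{0}$ depends on $R$, but this is harmless because we only need a single lower bound on $|\Gamma\cap B_{R'}|$ for each $R'$, not a uniform choice of coset.
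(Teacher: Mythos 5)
Your proof is correct and follows essentially the same route as the paper: decompose $\SL_{2}\left(\ZZ\right)\cap B_{R}$ along the finitely many cosets of $\Gamma$, use the lower bound $\left|\SL_{2}\left(\ZZ\right)\cap B_{R}\right|\geq\frac{6}{\pi^{2}}R^{2}$, and translate back into $\Gamma$ via the max-norm estimate $\norm{AB}_{\infty}\leq2\norm A_{\infty}\norm B_{\infty}$, landing in $\Gamma\cap B_{2MR}$. Your pigeonhole selection of a single coset is just a cosmetic variant of the paper's summing over all cosets (which gives $\left|\SL_{2}\left(\ZZ\right)\cap B_{R}\right|\leq n\left|\Gamma\cap B_{2MR}\right|$), and both yield the same constant $C_{\Gamma}=\frac{3}{2n\pi^{2}M^{2}}$.
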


\begin{proof}
Let $g_{1}^{-1},...,g_{n}^{-1}$ be coset representatives of $\Gamma$
in $\SL_{2}\left(\RR\right)$. If $g\in\SL_{2}\left(\ZZ\right)\cap B_{R}$,
then $g=g_{i}^{-1}h$ for some $h\in\Gamma$ and $i\leq n$, hence
$\norm h_{\infty}\leq2\norm g_{\infty}\norm{g_{i}}_{\infty}$. Thus,
setting $M=\max\norm{g_{i}}$ we obtain that
\[
CR^{2}\leq\left|\SL_{2}\left(\ZZ\right)\cap B_{R}\right|\leq\sum_{1}^{n}\left|g_{i}^{-1}\left(\Gamma\cap B_{2RM}\right)\right|=n\left|\Gamma\cap B_{2RM}\right|
\]
for all $R$ big enough, hence $\left|\Gamma\cap B_{R}\right|\geq\frac{C}{n4M^{2}}R^{2}$
for all $R$ big enough.
\end{proof}
\newpage{}
\begin{rem}
Behind the curtains of what we did here hides an action of the group
\\
$U=\left\{ \left(\begin{array}{cc}
1 & x\\
0 & 1
\end{array}\right)\;\mid\;x\in\RR\right\} =stab_{\SL_{2}\left(\RR\right)}\left(1,0\right)$ which we saw when we looked for completion from a primitive vector
to a matrix. Note that since $\SL_{2}\left(\RR\right)$ acts transitively
on $\RR^{2}\backslash\left\{ 0\right\} $, we can write it as $\RR^{2}\cong\SL_{2}\left(\RR\right)/U$,
so that the primitive vectors correspond to the orbit $\SL_{2}\left(\ZZ\right)e_{1}\to\SL_{2}\left(\ZZ\right)\cdot Id/U$.
We can reverse the roles of $\SL_{2}\left(\ZZ\right)$ and $U$ and
look on $U$ orbits on the space $\SL_{2}\left(\ZZ\right)\backslash\SL_{2}\left(\RR\right)$.
This duality between left and right orbits let us use results from
one side and translate it to the second. In this case specifically,
the acting group is $U\cong\RR$ is a very simple to work with group,
and it is usually called the horocycle group. This type of orbits
are well known and mostly understood, and this process is used often
to count lattice points. For more details, see \cite{eskin_mixing_1993}.
\end{rem}

\bibliographystyle{plain}
\bibliography{girth}

\end{document}